\theoremstyle{definition}
\newtheorem{theorem}{Theorem}[section]
\newtheorem{lemma}[theorem]{Lemma}
\newtheorem{corollary}[theorem]{Corollary}
\newcommand{\diag}{\ensuremath\mathop{\mathrm{diag}}}
\newcommand{\fl}{\ensuremath\mathop{\mathrm{fl}}}
\newcommand{\FL}{\ensuremath\mathop{\mathrm{FL}}}
\newcommand{\fma}{\ensuremath\mathop{\mathrm{fma}}}
\newcommand{\hypot}{\ensuremath\mathop{\mathrm{hypot}}}
\newcommand{\sign}{\ensuremath\mathop{\mathrm{sign}}}
\begin{document}
\title[Enhancing the $2\times 2$ Kogbetliantz SVD]{Arithmetical enhancements of the Kogbetliantz method for the SVD of order two}
\author*{\fnm{Vedran}~\sur{Novakovi\'{c}}}\email{venovako@venovako.eu}
\equalcont{\url{https://orcid.org/0000-0003-2964-9674}}
\affil*{\orgdiv{independent} \orgname{researcher}, \orgaddress{\street{Vankina ulica 15}, \state{HR}-\postcode{10020} \city{Zagreb}, \country{Croatia}}}
\abstract{An enhanced Kogbetliantz method for the singular value
  decomposition (SVD) of general matrices of order two is proposed.
  The method consists of three phases: an almost exact prescaling,
  that can be beneficial to the LAPACK's \texttt{xLASV2} routine for
  the SVD of upper triangular $2\times 2$ matrices as well, a highly
  relatively accurate triangularization in the absence of underflows,
  and an alternative procedure for computing the SVD of triangular
  matrices, that employs the correctly rounded $\hypot$ function.  A
  heuristic for improving numerical orthogonality of the left singular
  vectors is also presented and tested on a wide spectrum of random
  input matrices.  On upper triangular matrices under test, the
  proposed method, unlike \texttt{xLASV2}, finds both singular values
  with high relative accuracy as long as the input elements are within
  a safe range that is almost as wide as the entire normal range.  On
  general matrices of order two, the method's safe range for which the
  smaller singular values remain accurate is of about half the width
  of the normal range.}
\keywords{singular value decomposition, the Kogbetliantz method, matrices of order two, roundoff analysis}
\pacs[MSC Classification]{65F15, 15A18, 65Y05}
\maketitle
\section{Introduction}\label{s:1}
The singular value decomposition (SVD) of a square matrix of order two
is a widely used numerical tool.  In LAPACK~\cite{Anderson-et-al-99}
alone, its \texttt{xLASV2} routine for the SVD of real upper
triangular $2\times 2$ matrices is a building block for the QZ
algorithm~\cite{Moler-Stewart-73} for the generalized eigenvalue
problem $Ax=\lambda Bx$, with $A$ and $B$ real and square, and for the
SVD of real bidiagonal matrices by the implicit QR
algorithm~\cite{Demmel-Kahan-90}.  Also, the oldest method for the SVD
of square matrices that is still in use was developed by
Kogbetliantz~\cite{Kogbetliantz-55}, based on the SVD of order two,
and as such is the primary motivation for this research.

This work explores how to compute the SVD of a \emph{general} matrix
of order two indirectly, by a careful scaling, a highly relatively
accurate triangularization if the matrix indeed contains no zeros, and
an alternative triangular SVD method, since the straightforward
formulas for general matrices are challenging to be evaluated stably.

Let $G$ be a square real matrix of order $n$.  The SVD of $G$ is a
decomposition $G=U\Sigma V^T$, where $U$ and $V$ are
orthogonal\footnote{If $G$ is complex, $U$ and $V$ are unitary and
$G=U\Sigma V^{\ast}$, but this case is only briefly dealt with here.}
matrices of order $n$ of the left and the right singular vectors of
$G$, respectively, and $\Sigma=\diag(\sigma_1^{},\ldots,\sigma_n)$ is
a diagonal matrix of its singular values, such that
$\sigma_i^{}\ge\sigma_j^{}\ge 0$ for all $i$ and $j$ where
$1\le i<j\le n$.

In the step $k$ of the Kogbetliantz SVD method, a pivot submatrix of
order two (or several of them, not sharing indices with each other,
if the method is parallel) is found according to the chosen pivot
strategy in the iteration matrix $G_k$, its SVD is computed, and
$U_k$, $V_k$, and $G_k$ are updated by the transformation matrices
$\mathsf{U}_k$ and/or $\mathsf{V}_k$, leaving zeros in the
off-diagonal positions $(j_k,i_k)$ and $(i_k,j_k)$ of $G_{k+1}$, as in
\begin{equation}
  \begin{gathered}
    G_0=\mathop{\mathrm{preprocess}}(G),\quad
    U_0=\mathop{\mathrm{preprocess}}(I),\quad
    V_0=\mathop{\mathrm{preprocess}}(I);\\
    G_{k+1}^{}=\mathsf{U}_k^T G_k^{} \mathsf{V}_k^{},\quad
    U_{k+1}^{}=U_k^{}\mathsf{U}_k^{},\quad
    V_{k+1}^{}=V_k^{}\mathsf{V}_k^{},\qquad
    k\ge 0;\\
    \mathop{\mathrm{convergence}}(k=K)\implies
    U\approx U_K^{},\quad
    V\approx V_K^{},\quad
    \sigma_i^{}\approx g_{ii}^{(K)},\quad
    1\le i\le n.
  \end{gathered}
  \label{e:SVD}
\end{equation}
The left and the right singular vectors of the pivot matrix are
embedded into identities to get $\mathsf{U}_k$ and $\mathsf{V}_k$,
respectively, with the index mapping from matrices of order two to
$\mathsf{U}_k$ and $\mathsf{V}_k$ being $(1,1)\mapsto(i_k,i_k)$,
$(2,1)\mapsto(j_k,i_k)$, $(1,2)\mapsto(i_k,j_k)$,
$(2,2)\mapsto(j_k,j_k)$, where $1\le i_k<j_k\le n$ are the pivot
indices.  The process is repeated until convergence, i.e., until for
some $k=K$ the off-diagonal norm of $G_K$ falls below a certain
threshold.

If $G$ has $m$ rows and $n$ columns, $m>n$, it should be
preprocessed~\cite{Charlier-et-al-87} to a square matrix $G_0^{}$ by a
factorization of the URV~\cite{Stewart-92} type (e.g., the QR
factorization with column pivoting~\cite{Quintana-Orti-et-al-98}).
Then, $U_0^TGV_0^{}=G_0^{}$, where $G_0^{}$ is triangular of order $n$
and $U_0$ is orthogonal of order $m$.  If $m<n$, then the SVD of $G^T$
can be computed instead.

In all iterations it would be beneficial to have the pivot matrix
$\widehat{G}_k$ triangular, since its SVD can be computed with high
relative accuracy under mild assumptions~\cite{Hari-Matejas-09}.  This
is however not possible with time consuming but simple,
quadratically convergent~\cite[Remark~6]{Charlier-VanDooren-87} pivot
strategy that chooses the pivot with the largest off-diagonal norm
$\sqrt{|g_{j_ki_k}^{(k)}|^2+|g_{i_kj_k}^{(k)}|^2}$, but is possible,
if $G_0$ is triangular, with certain sequential cyclic (i.e.,
periodic)
strategies~\cite{Charlier-et-al-87,Hari-Veselic-87} like the
row-cyclic and column-cyclic, and even with some parallel ones, after
further preprocessing $G_0$ into a suitable ``butterfly''
form~\cite{Hari-Zadelj-Martic-07}.

Although the row-cyclic and column-cyclic strategies ensure
global~\cite{Hari-Veselic-87} and asymptotically
quadratic~\cite{Charlier-VanDooren-87,Hari-91} convergence of the
method, as well as its high relative accuracy~\cite{Matejas-Hari-15},
the method's sequential variants remain slow on modern hardware, while
preprocessing $G$ to $G_0$ (in the butterfly form or not) can only be
partially parallelized.

This work is a part of a broader
effort~\cite{Novakovic-20,Novakovic-Singer-22} to investigate if a
fast and accurate (in practice if not in theory) variant of the
Kogbetliantz method could be developed, that would be entirely
parallel and would function on general square matrices without
expensive preprocessing, with full pivots $\widehat{G}_k^{[\ell]}$,
$1\le\ell\le\mathsf{n}\le\lfloor n/2\rfloor$, that are independently
diagonalized, and with $\mathsf{n}$ ensuing concurrent updates of
$U_k$, $\mathsf{n}$ of $V_k$, and $\mathsf{n}$ from each of the sides
of $G_k$ in a parallel step.  This way the employed parallel pivot
strategy does not have to be cyclic.  A promising candidate is the
dynamic
ordering~\cite{Becka-et-al-02,Oksa-et-al-22,Novakovic-Singer-22}.

The proposed Kogbetliantz SVD of order two supports a wider exponent
range of the elements of a triangular input matrix for which both
singular values are computed with high relative accuracy than
\texttt{xLAEV2}, although the latter is slightly more accurate when
comparison is possible.  Matrices of the singular vectors obtained by
the proposed method are noticeably more numerically orthogonal.  With
respect to~\cite{Novakovic-20,Novakovic-Singer-22} and a general
matrix $G$ of order two, the following enhancements have been
implemented:
\begin{enumerate}
\item The structure of $G$ is exploited to the utmost extent, so the
  triangularization and a stronger scaling is employed only when $G$
  has no zeros, thus preserving accuracy.
\item The triangularization of $G$ by a special URV factorization is
  tweaked so that high relative accuracy of each computed element is
  provable when no underflow occurs.
\item The SVD procedure for triangular matrices utilizes the latest
  advances in computing the correctly rounded functions, so the
  pertinent formulas from~\cite{Novakovic-20,Novakovic-Singer-22} are
  altered.
\item The left singular vectors are computed by a heuristic when the
  triangularization is involved, by composing the two plane
  rotations---one from the URV factorization, and the other from the
  triangular SVD---into one without the matrix multiplication.
\end{enumerate}
High relative accuracy of the singular values of $G$ is observed, but
not proved, when the range of the elements of $G$ is narrower than
about half of the entire normal range.

\looseness=-1
This paper is organized as follows.  In Section~\ref{s:2} the
floating-point environment and the required operations are described,
and some auxiliary results regarding them are proved.
Section~\ref{s:3} presents the proposed SVD method.  In
Section~\ref{s:4} the numerical testing results are shown.
Section~\ref{s:5} concludes the paper with the comments on future
work.
\section{Floating-point considerations}\label{s:2}
Let $x$ be a real, infinite, or undefined (Not-a-Number) value:
$x\in\mathbb{R}\cup\{-\infty,+\infty,\mathrm{NaN}\}$.  Its floating-point
representation is denoted by $\fl(x)$ and is obtained by rounding $x$
to a value of the chosen standard floating-point datatype using
the rounding mode in effect, that is here assumed to be to
\emph{nearest} (ties to even).  If the result is normal,
$\fl(x)=x(1+\epsilon)$, where $|\epsilon|\le\varepsilon=2^{-p}$
and $p$ is the number of bits in the significand of a floating-point
value.  In the LAPACK's terms,
$\varepsilon_{\text{\texttt{x}}}=\text{\texttt{xLAMCH('E')}}$, where
$\text{\texttt{x}}=\text{\texttt{S}}$ or \texttt{D}.  Thus,
$p_{\text{\texttt{x}}}=24$ or $53$, and
$\varepsilon_{\text{\texttt{x}}}=2^{-24}$ or $2^{-53}$ for single
(\texttt{S}) or double (\texttt{D}) precision, respectively.  The
gradual underflow mode, allowing subnormal inputs and outputs, has to
be enabled (e.g., on Intel-compatible architectures the
Denormals-Are-Zero and Flush-To-Zero modes have to be turned off).
Trapping on floating-point exceptions has to be disabled (what is the
default non-stop handling from~\cite[Sect.~7.1]{IEEE-754-2019}).

Possible discretization errors in input data are ignored.
Input matrices in floating-point are thus considered exact, and are,
for simplicity, required to have finite elements.

The Fused Multiply-and-Add (FMA) function,
$\fma(a,b,c)=\fl(a\cdot b+c)$, is required.  Conceptually, the exact
value of $a\cdot b+c$ is correctly rounded.  Also, the hypotenuse
function, $\hypot(a,b)=\fl(\sqrt{a^2+b^2})$, is assumed to be
correctly rounded (unless stated otherwise), as recommended by the
standard~\cite[Sect.~9.2]{IEEE-754-2019}, but
unlike\footnote{\url{https://members.loria.fr/PZimmermann/papers/accuracy.pdf}}
many current implementations of the routines \texttt{hypotf} and
\texttt{hypot}.  Such a function (see also~\cite{Borges-20}) never
overflows when the rounded result should be finite, it is zero if and
only if $|a|=|b|=0$, and is symmetric and monotone.  The CORE-MATH
library~\cite{Sibidanov-et-al-22} provides an open-source
implementation\footnote{\url{https://core-math.gitlabpages.inria.fr}}
of some of the optional correctly rounded single and double precision
mathematical C functions (e.g., \texttt{cr\_hypotf} and
\texttt{cr\_hypot}).

Radix two is assumed for floating-point values.  Scaling of a value
$x$ by $2^s$ where $s\in\mathbb{Z}$ is exact if the result is normal.
Only non-normal results can lose precision.  Let, for $x=\pm 0$,
$e_x=0$ and $f_x=0$, and for a finite non-zero $x$ let the exponent be
$e_x=\lfloor\lg|x|\rfloor$ and the ``mantissa'' $1\le|f_x|<2$, such
that $x=\fl(2^{e_x}f_x)$.  Also, let $f_x=x$ for $x=\pm\infty$, while
$e_x=0$.  Note that $f_x$ is normal even for subnormal $x$.  Keep in
mind that the \texttt{frexp} routine represents a finite non-zero $x$
with $e_x'=e_x^{}+1$ and $f_x'=f_x^{}/2$.

Let $\mu$ be the smallest and $\nu$ the largest positive finite normal
value.  Then, in the notation just introduced, $e_{\mu}=\lg\mu=-126$
or $-1022$, and $e_{\nu}=\lfloor\lg\nu\rfloor=127$ or $1023$, for
single or double precision.  Lemma~\ref{l:nu} can now be stated using
this notation.

\begin{lemma}\label{l:nu}
  Assume that $e_{\nu}-p\ge 1$, with rounding to nearest.  Then,
  \begin{equation}
    \fl(\nu+1)=\nu=\hypot(\nu,1).
    \label{e:nu}
  \end{equation}
\end{lemma}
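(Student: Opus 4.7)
The plan is to pin down the exact floating-point geometry near $\nu$, then show that both $\nu+1$ and $\sqrt{\nu^2+1}$ fall strictly inside the round-to-nearest interval that maps to $\nu$, so neither triggers overflow.

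First I would recall that $\nu$ lies at the top of its binade and equals $\nu = 2^{e_\nu+1} - 2^{e_\nu+1-p}$, so the unit in the last place at $\nu$ is $u = 2^{e_\nu+1-p}$. Under the hypothesis $e_\nu - p \ge 1$ this gives $u/2 = 2^{e_\nu-p} \ge 2$, and in particular $u/2 > 1$. Since no finite floating-point value exists above $\nu$, the IEEE 754 round-to-nearest rule treats $2^{e_\nu+1}$ as a virtual neighbour: an exact value $x \in (\nu,\,\nu + u/2)$ rounds down to $\nu$, while $x \ge \nu + u/2$ overflows to $+\infty$. Because $\nu < \nu + 1 < \nu + u/2$, this directly yields $\fl(\nu+1) = \nu$.

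For the hypotenuse I would use the elementary identity
\begin{equation*}
  \sqrt{\nu^2+1} - \nu = \frac{1}{\sqrt{\nu^2+1}+\nu} < \frac{1}{2\nu} < 1,
\end{equation*}
which sandwiches $\nu < \sqrt{\nu^2+1} < \nu + 1$. Since correctly rounded hypot is, by construction, the round-to-nearest of the exact $\sqrt{a^2+b^2}$, and rounding-to-nearest is monotone, applying $\fl$ to the chain and using $\fl(\nu)=\nu$ together with the first part gives $\nu \le \hypot(\nu,1) \le \fl(\nu+1) = \nu$, hence equality.

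The only subtle point---and the one the hypothesis is tailored to---is the behaviour of rounding at the overflow boundary, where one has to appeal to the IEEE convention that $\nu$'s missing right neighbour is located at distance $u$ so that the decision midpoint sits at $\nu + u/2$. Everything else is routine once $u/2 \ge 2 > 1$ is secured; no exception handling, no subnormal issue, and no tie-breaking arises because both candidate exact values lie strictly below the midpoint.
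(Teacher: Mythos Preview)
Your proof is correct and follows essentially the same approach as the paper: both establish that the unit in the last place at $\nu$ is large enough (the paper says it is at least $4$, you say $u/2\ge 2$) so that $\nu+1$ rounds to $\nu$, and both then sandwich $\nu\le\sqrt{\nu^2+1}\le\nu+1$ and invoke monotonicity of round-to-nearest and of $\hypot$ to conclude $\hypot(\nu,1)=\nu$. The only presentational difference is that the paper reaches $\fl(\nu+1)=\nu$ by inspecting the explicit bit pattern of $\nu+1$ and truncating, whereas you argue via the rounding midpoint and make the overflow-boundary convention explicit; your version is, if anything, slightly more careful on that point.
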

\begin{proof}
  By the assumption, $\nu\ge 2^{p+1}+2^p+\cdots+2^2$, since
  $\nu=2^{e_{\nu}}(1.1\cdots 1)_2$, with $p$ ones.  The bit in the
  last place thus represents a value of at least four.  Adding one to
  $\nu$ would require rounding of the exact value
  $\nu+1=2^{e_{\nu}}\cdot(1.1\cdots 10\cdots 01)_2$ to $p$ bits of
  significand.  The number of zeros is $e_{\nu}-p\ge 1$.  Rounding to
  nearest in such a case is equivalent to truncating the trailing
  $e_{\nu}-p+1$ bits, starting from the leftmost zero, giving the
  result $\fl(\nu+1)=2^{e_{\nu}}(1.1\cdots 1)_2=\nu$.  This proves the
  first equality in~\eqref{e:nu}.

  For the second equality in~\eqref{e:nu}, note that
  $(\nu+1)^2=\nu^2+1+2\nu>\nu^2+1>\nu^2$ since $\nu>0$.  By taking the
  square roots, it follows that $\nu+1>\sqrt{\nu^2+1}>\nu$, and
  therefore
  \begin{displaymath}
    \nu=\fl(\nu+1)\ge\fl(\sqrt{\nu^2+1})=\hypot(\nu,1)\ge\hypot(\nu,0)=\fl(\nu)=\nu,
  \end{displaymath}
  since $\fl$ and $\hypot$ are monotone operations in all arguments.
\end{proof}

The claims of Lemma~\ref{l:nu} and its following corollaries were used
and their proofs partially sketched in~\cite{Novakovic-24}, e.g.  They
are expanded and clarified here for completeness.

An underlined \underline{expression} denotes a computed floating-point
approximation of the exact value of that expression.  Assuming
$0\le|\phi|\le\pi/4$, and with $\tan(2\phi)$ and
$\underline{\tan(2\phi)}$ given, where the latter may or may not be
equal to $\fl(\tan(2\phi))$, $\tan\phi$ and $\underline{\tan\phi}$ are
\begin{equation}
  \tan\phi=\frac{\tan(2\phi)}{1+\sqrt{\tan^2(2\phi)+1}},\qquad
  \underline{\tan\phi}=\fl\left(\frac{\underline{\tan(2\phi)}}{\fl(1+\hypot(\underline{\tan(2\phi)},1))}\right),
  \label{e:tf}
\end{equation}
if $\tan(2\phi)$ and $\underline{\tan(2\phi)}$ are finite, or
$\sign(\tan(2\phi))$ and $\sign(\underline{\tan(2\phi)})$ otherwise.

\begin{corollary}\label{c:tf}
  Let $\tan(2\phi)$ be given, such that $\underline{\tan(2\phi)}$ is
  finite.  Then, under the assumptions of Lemma~\ref{l:nu}, for
  $\underline{\tan\phi}$ from~\eqref{e:tf} holds
  $0\le|\underline{\tan\phi}|\le 1$.
\end{corollary}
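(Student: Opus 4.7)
Let me set $t := \fl(\tan(2\phi))$, finite by hypothesis, and $d := \fl(1+\hypot(t,1))$.

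\medskip

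The plan is to establish three facts in sequence: (i) no intermediate quantity overflows, so the chain of operations in~\eqref{e:tf} is well-defined; (ii) the exact ratio $t/d$ lies in $[-1,1]$; (iii) rounding a value in $[-1,1]$ to nearest cannot escape $[-1,1]$, because $\pm 1$ are representable.

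\medskip

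For the overflow-freedom step I would use Lemma~\ref{l:nu} together with monotonicity of $\hypot$ and $\fl$. Since $|t|\le\nu$, monotonicity gives $\hypot(t,1)\le\hypot(\nu,1)=\nu$ by Lemma~\ref{l:nu}, so $\hypot(t,1)$ is finite. Then $1+\hypot(t,1)\le 1+\nu$, and another appeal to monotonicity together with $\fl(\nu+1)=\nu$ yields $d\le\nu$, again finite. Finally, since $\hypot(t,1)\ge 1$ and $\fl(1+1)=2$, we have $d\ge 2>0$, so the division in~\eqref{e:tf} is well-defined.

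\medskip

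For step (ii), the key inequality is $d\ge |t|$. I would argue this by noting that $\hypot(t,1)=\fl(\sqrt{t^2+1})\ge\fl(|t|)=|t|$ by monotonicity of $\fl$, hence $1+\hypot(t,1)\ge 1+|t|\ge|t|$; since $|t|$ is itself a representable floating-point value and $\fl$ rounds to nearest, the exact value $1+\hypot(t,1)\ge|t|$ cannot round to anything strictly smaller than $|t|$ (its two nearest floating-point neighbors both sit at or above $|t|$). Therefore $d\ge|t|$, which gives $|t/d|\le 1$ for the exact quotient (with the convention $0/d=0$ handling the $t=0$ case trivially).

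\medskip

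For step (iii), since $-1,0,+1$ are representable and $\fl$ is monotone, $\fl(t/d)\in[\fl(-1),\fl(1)]=[-1,1]$, so $0\le|\underline{\tan\phi}|\le 1$. The main obstacle I anticipate is step (ii): the subtle point is not the exact arithmetic inequality but the rounding in forming $d$, and one must invoke both the representability of $|t|$ and the monotonicity of rounding-to-nearest rather than a naive relative-error bound, which would only guarantee $d\ge(1-\varepsilon)(1+|t|)$ and could in principle be less than $|t|$ by a tiny amount.
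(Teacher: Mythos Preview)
Your proof is correct and follows essentially the same route as the paper's: both use Lemma~\ref{l:nu} together with monotonicity of $\fl$ and $\hypot$ to conclude that the denominator is finite and at least as large in magnitude as the numerator, whence $|\underline{\tan\phi}|\le 1$. The paper's version is terser---it only treats the boundary case $|t|=\nu$ explicitly and then asserts without further detail that ``the numerator is always at most as large in magnitude as the denominator''---whereas you spell out the monotonicity argument for general $t$ and carefully justify why $\fl(1+\hypot(t,1))\ge|t|$ (indeed, that step reduces simply to $\fl(x)\ge\fl(|t|)=|t|$ once $x\ge|t|$ and $|t|$ is representable, so your caveat about a naive relative-error bound failing is well taken but the monotonicity route you chose avoids it cleanly).
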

\begin{proof}
  For $|\underline{\tan(2\phi)}|=\nu$, due to Lemma~\ref{l:nu},
  $\hypot(\underline{\tan(2\phi)},1)=\nu$, and so the denominator
  in~\eqref{e:tf} is $\fl(1+\nu)=\nu$.  Note that the numerator is
  always at most as large in magnitude as the denominator.  Thus,
  $0\le|\underline{\tan\phi}|\le 1$, what had to be proven.
\end{proof}

\begin{corollary}\label{c:sec}
  Let $\tan{\phi}$ be given, for $|\phi|\le\pi/2$.  Then,
  $\sec{\phi}=1/\cos{\phi}$ can be approximated as
  $\underline{\sec{\phi}}=\hypot(\underline{\tan{\phi}},1)$.  If
  $\tan{\phi}=\underline{\tan{\phi}}$, then
  $\underline{\sec{\phi}}=\fl(\sec{\phi})$.  When the assumptions of
  Lemma~\ref{l:nu} hold and $\underline{\tan{\phi}}$ is finite, so is
  $\underline{\sec{\phi}}$.
\end{corollary}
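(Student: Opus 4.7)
The plan is to reduce all three assertions to the Pythagorean identity $\sec^2{\phi}=1+\tan^2{\phi}$, combined with the correct-rounding property of $\hypot$ recalled in this section and with Lemma~\ref{l:nu}. Because $\cos{\phi}\ge 0$ on the interval $|\phi|\le\pi/2$, one has $\sec{\phi}\ge 0$, so the nonnegative branch $\sec{\phi}=\sqrt{1+\tan^2{\phi}}$ applies.

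First, to motivate the definition $\underline{\sec{\phi}}=\hypot(\fl(\tan{\phi}),1)$, I would substitute $\fl(\tan{\phi})$ for the exact $\tan{\phi}$ in the Pythagorean identity and evaluate $\sqrt{1+(\cdot)^2}$ by $\hypot$, which is precisely the prescription in the statement. Next, for the exactness claim, assume $\tan{\phi}=\fl(\tan{\phi})$. Then the exact radicand whose square root $\hypot$ is contractually required to return, correctly rounded, is $\fl(\tan{\phi})^2+1=\tan^2{\phi}+1=\sec^2{\phi}$, whose nonnegative square root is $\sec{\phi}$ itself. Correct rounding of $\hypot$ therefore yields $\hypot(\fl(\tan{\phi}),1)=\fl(\sec{\phi})$, as desired.

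For the finiteness claim under the hypotheses of Lemma~\ref{l:nu}, suppose $\fl(\tan{\phi})$ is finite, i.e., $|\fl(\tan{\phi})|\le\nu$. By the symmetry and monotonicity of $\hypot$ in its arguments noted earlier in Section~\ref{s:2}, together with Lemma~\ref{l:nu}, one obtains
\begin{displaymath}
  \hypot(\fl(\tan{\phi}),1)=\hypot(|\fl(\tan{\phi})|,1)\le\hypot(\nu,1)=\nu<\infty,
\end{displaymath}
so $\underline{\sec{\phi}}$ is finite.

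I anticipate no serious obstacle. The only mild subtlety is pinning down the sign branch---confirming that $\sec{\phi}$ coincides with the nonnegative square root on the stated interval---and observing that, when the first argument of $\hypot$ is exactly $\tan{\phi}$, the quantity $\hypot$ is specified to correctly round is literally $\sec^2{\phi}$, so that the contract for $\hypot$ transfers directly into correct rounding of $\sec{\phi}$. Both checks are immediate, so the proof will be short.
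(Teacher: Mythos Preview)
Your proposal is correct and follows essentially the same route as the paper: both rely on the identity $\sec\phi=\sqrt{\tan^2\phi+1}$ together with the correct-rounding contract of $\hypot$ for the approximation and exactness claims, and invoke Lemma~\ref{l:nu} (via $|\fl(\tan\phi)|\le\nu\Rightarrow\hypot(\fl(\tan\phi),1)\le\nu$) for finiteness. Your version is merely a bit more explicit about the sign branch and the monotonicity/symmetry of $\hypot$, which the paper leaves implicit.
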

\begin{proof}
  The approximation relation follows from the definition of $\hypot$
  and from $\sec\phi=\sqrt{\tan^2\phi+1}$, while its finiteness for a
  finite $\underline{\tan\phi}$ follows from Lemma~\ref{l:nu}, since
  $|\underline{\tan\phi}|\le\nu$ implies
  $\hypot(\underline{\tan\phi},1)\le\nu$.
\end{proof}

For any $w\in\mathbb{R}$, let $\mathbf{w}=(e_w,f_w)=2^{e_w}f_w$ and
$\fl(\mathbf{w})=\fl(2^{e_w}f_w)\approx w$.  Even $w$ such that
$|w|>\nu$ or $0<|w|<\check{\mu}$, where $\check{\mu}$ is the smallest
positive non-zero floating-point value, can be represented with a
finite $e_w$ and a normalized $f_w$, though $\fl(\mathbf{w})$ is not
finite or non-zero, respectively.  The closest double precision
approximation of $\mathbf{w}$ is
$\underline{w}=\text{\texttt{scalbn($f_w,e_w$)}}$, with a possible
underflow or overflow, and similarly for single precision (using
\texttt{scalbnf}).  A similar definition could be made with $e_w'$ and
$f_w'$ instead.

An overflow-avoiding addition and an underflow-avoiding subtraction of
positive finite values $x$ and $y$, resulting in such
exponent-``mantissa'' pairs, can be defined as
\begin{equation}
  x\oplus y=\begin{cases}
  (e_z,f_z),&\text{if $z=\fl(x+y)\le\nu$},\\
  (e_z+1,f_z),&\text{otherwise, with $z=\fl(2^{-1}x+2^{-1}y)$},
  \end{cases}
  \label{e:plus}
\end{equation}
and, assuming $x\ne y$ (for $x=y$ let $x\ominus y=(0,0)$ directly),
\begin{equation}
  x\ominus y=\begin{cases}
  (e_z,f_z),&\text{if $|z|\ge\mu$, with $z=\fl(x-y)$},\\
  (e_z-c,f_z),&\text{otherwise, with $z=\fl(2^c x - 2^c y)$},
  \end{cases}
  \label{e:minus}
\end{equation}
where $c=e_{\mu}+p-1-\min\{e_x,e_y\}$.  In~\eqref{e:plus}, $z\le\nu$
in both cases, since $\fl(\nu/2+\nu/2)=\nu$.

\begin{lemma}\label{l:minus}
  If $e_{\nu}\ge e_{\mu}+2p-1$, there exists $c>0$ such that
  $\mu\le|z|\le\nu$ in~\eqref{e:minus}.
\end{lemma}
\begin{proof}
  Assume $x>y$ (else, swap $x$ and $y$, and change the sign of $z$).
  Then, $e_y\le e_x$ and
  $y=2^{e_y}(1.\mathtt{y}_{-1}\cdots\mathtt{y}_{1-p})_2$.  The
  rightmost bit $\mathtt{y}_{1-p}$ multiplies the value
  $w=2^{e_y+1-p}$.

  If $e_x-e_y\ge p+1$ then $x$ is normal and, due to rounding to
  nearest, $\fl(x-y)=x$.  Therefore, assume that $e_x-e_y\le p$.  If
  $w\ge\mu=2^{e_{\mu}}$, then $x-y\ge\mu$ as well, since $x-y\ge w$.
  Thus, $\fl(x-y)\ge w\ge\mu$, so assume that $w<\mu$, i.e.,
  $e_y+1-p<e_{\mu}$.

  It now suffices to upscale $x$ and $y$ to $x''=2^c x$ and
  $y''=2^c y$, for some $c\in\mathbb{N}$, to ensure
  $e_y''=e_y^{}+c\ge e_{\mu}^{}+p-1$.  Any
  $c\ge e_{\mu}^{}+p-1-e_y^{}$ that will not overflow $x''$ will do,
  so the smallest one is chosen.  Note that
  $e_x''=e_x^{}+c=e_x^{}+e_{\mu}^{}+p-e_y^{}-1$.  Since
  $e_x^{}-e_y^{}\le p$, by the Lemma's assumption it holds
  $e_x''\le e_{\mu}^{}+2p-1\le e_{\nu}$.
\end{proof}

Several arithmetic operations on $(e,f)$-pairs can be defined (see
also~\cite{Novakovic-23}), such as 
\begin{equation}
  \begin{gathered}
    |\mathbf{x}|=(e_x,|f_x|),\qquad-\mathbf{x}=(e_x,-f_x),\qquad 2^{\varsigma}\odot\mathbf{x}=(\varsigma+e_x,f_x),\\
    1\oslash\mathbf{y}=(e_z-e_y,f_z),\quad z=\fl(1/f_y),
  \end{gathered}
  \label{e:unary}
\end{equation}
which are unary operations.  The binary multiplication and division
are defined as
\begin{equation}
  \begin{gathered}
    \mathbf{x}\odot\mathbf{y}=(e_x+e_y+e_z,f_z),\quad
    z=\fl(f_x\cdot f_y),\\
    \mathbf{x}\oslash\mathbf{y}=(e_x-e_y+e_z,f_z),\quad
    z=\fl(f_x/f_y),
  \end{gathered}
  \label{e:binary}
\end{equation}
and the relation $\prec$, that compares the represented values in the
$<$ sense, is given as
\begin{equation}
  \begin{aligned}
    \mathbf{x}\prec\mathbf{y}\iff&(\sign(f_x)<\sign(f_y))\\
    \vee&((\sign(f_x)=\sign(f_y))\wedge((e_x<e_y)\vee((e_x=e_y)\wedge(f_x<f_y)))).
  \end{aligned}
  \label{e:prec}
\end{equation}

Let, for any $G$ of order $n$, where \texttt{INT\_MIN} is the smallest
representable integer,
\begin{equation}
  e_G = \max_{1\le i,j\le n}e_{ij},\quad
  e_{ij}=\max\{\left\lfloor\lg g_{ij}\right\rfloor,\text{\texttt{INT\_MIN}}\}.
  \label{e:eG}
\end{equation}
A prescaling of $G$ as $\underline{G'}=2^s G$, that avoids overflows,
and underflows if possible, in the course of computing the SVD of
$\underline{G'}$ (and thus of
$\underline{G}\approx 2^{-s}\underline{G'}$), is defined by $s$ such
that
\begin{equation}
  e_G=\text{\texttt{INT\_MIN}}\implies
  s=0,\qquad
  e_G>\text{\texttt{INT\_MIN}}\implies
  s=e_{\nu}-e_G-\mathfrak{s},\quad
  \mathfrak{s}\ge 0,
  \label{e:s}
\end{equation}
where $\mathfrak{s}=0$ for $n=1$.  For $n=2$, $\mathfrak{s}$ is chosen
such that certain intermediate results while computing the SVD of
$\underline{G'}$ cannot overflow, as explained
in~\cite{Novakovic-20,Novakovic-Singer-22} and Section~\ref{s:3}, but
the final singular values are represented in the $(e,f)$ form, and are
immune from overflow and underflow as long as they are not converted
to simple floating-point values.  If $s\ge 0$, the result of such a
prescaling is exact.  Otherwise, some elements of $\underline{G'}$
might be computed inexactly due to underflow.  If for the elements of
$G$ holds
\begin{equation}
  g_{ij}\ne 0\implies\mu\le|g_{ij}|\le\nu/2^{\mathfrak{s}},\quad
  1\le i,j\le n,
  \label{e:safe}
\end{equation}
then $s\ge 0$, and $g_{ij}'=0$ or $\mu\le|g_{ij}'|\le\nu$, i.e., the
elements of $G'$ are zero or normal.
\section{The SVD of general matrices of order two}\label{s:3}
This section presents a Kogbetliantz-like procedure for computing the
singular values of $G$ when $n=2$, and the matrices of the left ($U$)
and the right ($V$) singular vectors.

In general, $U$ is a product of permutations (denoted by $P$ with
subscripts and including $I$), sign matrices (denoted by $S$ with
subscripts) with each diagonal element being either $1$ or $-1$ while
the rest are zeros, and plane rotations by the angles $\vartheta$ and
$\varphi$.  If $U_{\vartheta}$ is not generated, the notation changes
from $U_{\varphi}$ to $U_{\phi}$.  Likewise, $V$ is a product of
permutations, a sign matrix, and a plane rotation by the angle $\psi$,
where
\begin{equation}
  U_{\vartheta}=\begin{bmatrix}
  \cos{\vartheta}&-\sin{\vartheta}\\
  \sin{\vartheta}&\hphantom{-}\cos{\vartheta}
  \end{bmatrix},\qquad
  U_{\varphi}=\begin{bmatrix}
  \cos{\varphi}&-\sin{\varphi}\\
  \sin{\varphi}&\hphantom{-}\cos{\varphi}
  \end{bmatrix},\qquad
  V_{\psi}=\begin{bmatrix}
  \cos{\psi}&-\sin{\psi}\\
  \sin{\psi}&\hphantom{-}\cos{\psi}
  \end{bmatrix}.
  \label{e:tfp}
\end{equation}

Depending on its pattern of zeros, a matrix of order two falls into
one of the 16 types $\mathfrak{t}$ shown in~\eqref{e:ts}, where
$\circ=0$ and $\bullet\ne 0$.  Some types are permutationally
equivalent to others, what is denoted by
$\mathfrak{t}_1\cong\mathfrak{t}_2$, and means that a
$\mathfrak{t}_1$-matrix can be pre-multiplied and/or post-multiplied
by permutations to be transformed into a $\mathfrak{t}_2$-matrix, and
vice versa, keeping the number of zeros intact.  Each
$\mathfrak{t}\ne 0$ has its associated scale type $\mathfrak{s}$.
\begin{equation}
  \begin{gathered}
    \begin{gathered}
      \text{\small 0}\\[-3pt]
      \begin{bmatrix}
        \circ & \circ\\[-1pt]
        \circ & \circ
      \end{bmatrix}
    \end{gathered}
    \begin{gathered}
      \text{\small 1}\\[-3pt]
      \begin{bmatrix}
        \bullet & \circ\\[-1pt]
        \circ & \circ
      \end{bmatrix}
    \end{gathered}
    \begin{gathered}
      \text{\small 2}\\[-3pt]
      \begin{bmatrix}
        \circ & \circ\\[-1pt]
        \bullet & \circ
      \end{bmatrix}
    \end{gathered}
    \begin{gathered}
      \text{\small 3}\\[-3pt]
      \begin{bmatrix}
        \bullet & \circ\\[-1pt]
        \bullet & \circ
      \end{bmatrix}
    \end{gathered}
    \begin{gathered}
      \text{\small 4}\\[-3pt]
      \begin{bmatrix}
        \circ & \bullet\\[-1pt]
        \circ & \circ
      \end{bmatrix}
    \end{gathered}
    \begin{gathered}
      \text{\small 5}\\[-3pt]
      \begin{bmatrix}
        \bullet & \bullet\\[-1pt]
        \circ & \circ
      \end{bmatrix}
    \end{gathered}
    \begin{gathered}
      \text{\small 6}\\[-3pt]
      \begin{bmatrix}
        \circ & \bullet\\[-1pt]
        \bullet & \circ
      \end{bmatrix}
    \end{gathered}
    \begin{gathered}
      \text{\small 7}\\[-3pt]
      \begin{bmatrix}
        \bullet & \bullet\\[-1pt]
        \bullet & \circ
      \end{bmatrix}
    \end{gathered}\\
    \begin{gathered}
      \begin{bmatrix}
        \circ & \circ\\[-1pt]
        \circ & \bullet
      \end{bmatrix}\\[-3pt]
      \text{\small 8}
    \end{gathered}
    \begin{gathered}
      \begin{bmatrix}
        \bullet & \circ\\[-1pt]
        \circ & \bullet
      \end{bmatrix}\\[-3pt]
      \text{\small 9}
    \end{gathered}
    \begin{gathered}
      \begin{bmatrix}
        \circ & \circ\\[-1pt]
        \bullet & \bullet
      \end{bmatrix}\\[-3pt]
      \text{\small 10}
    \end{gathered}
    \begin{gathered}
      \begin{bmatrix}
        \bullet & \circ\\[-1pt]
        \bullet & \bullet
      \end{bmatrix}\\[-3pt]
      \text{\small 11}
    \end{gathered}
    \begin{gathered}
      \begin{bmatrix}
        \circ & \bullet\\[-1pt]
        \circ & \bullet
      \end{bmatrix}\\[-3pt]
      \text{\small 12}
    \end{gathered}
    \begin{gathered}
      \begin{bmatrix}
        \bullet & \bullet\\[-1pt]
        \circ & \bullet
      \end{bmatrix}\\[-3pt]
      \text{\small 13}
    \end{gathered}
    \begin{gathered}
      \begin{bmatrix}
        \circ & \bullet\\[-1pt]
        \bullet & \bullet
      \end{bmatrix}\\[-3pt]
      \text{\small 14}
    \end{gathered}
    \begin{gathered}
      \begin{bmatrix}
        \bullet & \bullet\\[-1pt]
        \bullet & \bullet
      \end{bmatrix}\\[-3pt]
      \text{\small 15}
    \end{gathered}
  \end{gathered}\quad
  \begin{gathered}
    \mathfrak{t}\\
    0, 1, 2, 4, 6, 8\cong\mathbf{9}\\
    12\cong\mathbf{3};\quad 10\cong\mathbf{5}\\
    7, 11, 14\cong\mathbf{13}\\
    \quad\mathbf{15}
  \end{gathered}\quad
  \begin{gathered}
    \mathfrak{s}\\
    0\\
    1\\
    1\\
    2
  \end{gathered}
  \label{e:ts}
\end{equation}

For $\mathfrak{s}=0$, there is one equivalence class of matrix types,
represented by $\mathfrak{t}=9$.  For $\mathfrak{s}=1$, there are
three classes, represented by $\mathfrak{t}=3$, $\mathfrak{t}=5$, and
$\mathfrak{t}=13$, while for $\mathfrak{s}=2$ there is one class,
$\mathfrak{t}=15$.  The SVD computation for the first three classes is
straightforward, while for the fourth and the fifth class is more
involved.  A matrix of any type, except $\mathfrak{t}=15$, can be
permuted into an upper triangular one.  If a matrix so obtained is
well scaled, its SVD can alternatively be computed by \texttt{xLASV2}.
However, \texttt{xLASV2} does not accept general matrices (i.e.,
$\mathfrak{t}=15$), unlike the proposed method, which is a
modification of the ``trigonometric'' case
from~\cite{Novakovic-Singer-22}, i.e., of the case where the sign
matrix $J=I$.  The proposed method consists of the following three
phases:
\begin{enumerate}
\item For $G$ determine $\mathfrak{t}$, $\mathfrak{s}$, and $s$ to
  obtain $\underline{G'}$.  Handle the simple cases of $\mathfrak{t}$
  separately.
\item If $\mathfrak{t}\cong 13$ or $\mathfrak{t}=15$, factorize
  $\underline{G'}$ as $U_+^{}RV_+^{}$, such that $U_+^{}$ and
  $V_+^{}$ are orthogonal, and $R$ is upper triangular, with
  $\min\{r_{11}^{},r_{12}^{},r_{22}^{}\}>0$ and all $r_{ij}$ finite,
  $1\le i\le j\le 2$.
\item From the SVD of $\underline{R}$ assemble the SVD of
  $\underline{G'}$.  Optionally backscale $\underline{\Sigma'}$ by
  $2^{-s}$.
\end{enumerate}
The phases 1, 2, and 3 are described in Sections~\ref{ss:3.1},
\ref{ss:3.2}, and~\ref{ss:3.3}, respectively.
\subsection{Prescaling of the matrix and the simple cases ($\mathfrak{t}\cong 3,5,9$)}\label{ss:3.1}
Matrices with $\mathfrak{t}\cong 9$ do not have to be scaled, but only
permuted into the $\mathfrak{t}=0$, $\mathfrak{t}=1$, or
$\mathfrak{t}=9$ (where the first diagonal element is not smaller by
magnitude than the second one) form, according to their number of
non-zeros, with at most one permutation from the left and at most one
from the right hand side.  Then, the rows of $P_U^T G P_V^{}$ are
multiplied by the signs of their diagonal elements, to obtain 
$\sigma_1=|g_{11}|$ and $\sigma_2=|g_{22}|$, while $U=P_US$ and
$V=P_V$.  The error-free SVD computation is thus completed.

Note that the signs might have been taken out of the columns instead
of the rows, and the sign matrix $S$ would have then be incorporated
into $V$ instead.  The structure of the left and the right singular
vector matrices is therefore not uniquely determined.

Be aware that $\mathfrak{t}$ determined before the prescaling (to
compute $\mathfrak{s}$ and $s$) may differ from $\mathfrak{t}'$ that
would be found afterwards.  If, e.g., $\mathfrak{t}\ncong 9$ and $G$
contains, among others, $\nu$ and $\check{\mu}$ as elements, the
element(s) $\check{\mu}$ will vanish after the prescaling since $s<0$
(from~\eqref{e:s}, due to $\mathfrak{s}\ge 1$), so
$\mathfrak{t}'<\mathfrak{t}$ and the zero pattern of
$\underline{G'}$ has to be re-examined.

\looseness=-1
A $\mathfrak{t}\cong 3$ or $\mathfrak{t}\cong 5$ matrix is scaled by
$2^s$.  The columns (resp., rows) of a $\mathfrak{t}'=12$ (resp.,
$\mathfrak{t}'=10$) matrix are swapped, to bring it to the
$\mathfrak{t}''=3$ (resp., $\mathfrak{t}''=5$) form.  Then, the
non-zero elements are made positive by multiplying the rows (resp.,
columns) by their signs. Next, the rows (resp., columns) are swapped
if required to make the upper left element largest by magnitude.  The
sign-extracting and magnitude-ordering operations may be swapped or
combined.  The resulting matrix $G''$ undergoes the QR (resp., RQ)
factorization, by a single Givens rotation $U_{\theta}^T$ (resp.,
$V_{\theta}^{}$), determined by $\tan\theta$ (consequently, by
$\cos\theta$ and $\sin\theta$) as in~\eqref{e:tfp}, with $\theta$
substituted for $\vartheta$ (resp., $\psi$), where
\begin{equation}
  \mathfrak{t}''=3\implies
  \tan\theta=g_{21}''/g_{11}'',\qquad
  \mathfrak{t}''=5\implies
  \tan\theta=g_{12}''/g_{11}''.
  \label{e:theta}
\end{equation}

\looseness=-1
By construction, $0<\tan\theta\le 1$ and
$0\le\underline{\tan\theta}\le 1$.  The upper left element is not
transformed, but explicitly set to hold the Frobenius norm of the
whole non-zero column (resp., row), as
$\underline{g_{11}'''}=\hypot(\underline{g_{11}''},\underline{g_{21}''})$
(resp.,
$\underline{g_{11}'''}=\hypot(\underline{g_{11}''},\underline{g_{12}''})$),
while the other non-zero element is zeroed out.  Thus, to avoid
overflow of $\underline{g_{11}'''}$ it is sufficient to ensure that
$\underline{g_{11}''}\ll\nu/\sqrt{2}$, what $\mathfrak{s}=1$ achieves.
The SVD is given by $U=S_U^{}P_U^{}U_{\theta}^{}$ and $V=P_V^{}$ for
$\mathfrak{t}'\cong 3$, and by $U=P_U^{}$ and
$V=S_V^{}P_V^{}V_{\theta}^{}$ for $\mathfrak{t}'\cong 5$.  The scaled
singular values are $\sigma_1'=g_{11}'''$ and $\sigma_2'=0$ in both
cases, and $\underline{\sigma_1'}$ cannot overflow
($\underline{\sigma_1^{}}=2^{-s}\underline{\sigma_1'}$ can).

If no inexact underflow occurs while scaling $G$ to $G'$, then
$\underline{\sigma_1'}=\sigma_1'(1+\epsilon_1')$, where
$|\epsilon_1'|\le\varepsilon$.  With the same assumption,
$\underline{\tan\theta}\ge\mu$ implies
$\underline{\tan\theta}=\tan\theta(1+\epsilon_{\theta}^{})$, where
$|\epsilon_{\theta}^{}|\le\varepsilon$.  The resulting Givens rotation
can be represented and applied as one of
\begin{equation}
  \underline{U_{\theta}^T}=\begin{bmatrix}
  1 & \underline{\tan\theta}\\
  -\underline{\tan\theta} & 1
  \end{bmatrix}/\underline{\sec\theta},\qquad
  \underline{V_{\theta}^{}}=\begin{bmatrix}
  1 & -\underline{\tan\theta}\\
  \underline{\tan\theta} & 1
  \end{bmatrix}/\underline{\sec\theta},
  \label{e:Givens}
\end{equation}
what avoids computing $\underline{\cos\theta}$ and
$\underline{\sin\theta}$ explicitly.  Lemma~\ref{l:sectheta} bounds the
error in $\underline{\sec\theta}$.

\begin{lemma}\label{l:sectheta}
  Let $\underline{\sec\theta}$ from~\eqref{e:Givens} be computed as
  $\hypot(\underline{\tan\theta},1)$ for $\underline{\tan\theta}\ge\mu$.
  Then,
  \begin{equation}
    \underline{\sec\theta}=\delta_{\theta}'\sec\theta,\quad
    \sqrt{((1-\varepsilon)^2+1)/2}(1-\varepsilon)\le\delta_{\theta}'\le\sqrt{((1+\varepsilon)^2+1)/2}(1+\varepsilon).
    \label{e:sectheta}
  \end{equation}
\end{lemma}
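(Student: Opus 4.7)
The plan is to unfold $\underline{\sec\theta}=\hypot(\underline{\tan\theta},1)$ into its two sources of error: the inherited error in $\underline{\tan\theta}$ (carried over from the paragraph just before the lemma) and the at most $\varepsilon$ relative error contributed by the correctly rounded $\hypot$. Concretely, write $\underline{\sec\theta}=\sqrt{\underline{\tan\theta}^2+1}\,(1+\epsilon_h)$ with $|\epsilon_h|\le\varepsilon$, and substitute $\underline{\tan\theta}=\tan\theta\,(1+\epsilon_\theta)$ with $|\epsilon_\theta|\le\varepsilon$, which is justified because the hypothesis $\underline{\tan\theta}\ge\mu$ guarantees the normal-range relation assumed in the preceding discussion.

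Next, I would reduce everything to the ratio
\begin{equation*}
  \frac{\underline{\tan\theta}^2+1}{\sec^2\theta}=\frac{t\,\alpha+1}{t+1},\qquad t=\tan^2\theta\in(0,1],\quad\alpha=(1+\epsilon_\theta)^2,
\end{equation*}
and extract its extrema over the admissible range of $t$. A short derivative computation gives $\frac{d}{dt}\bigl((t\alpha+1)/(t+1)\bigr)=(\alpha-1)/(t+1)^2$, so the ratio is monotone in $t$ with sign of monotonicity determined by $\alpha-1$. The key observation — and the place where the constructional bound $\tan\theta\le 1$ from~\eqref{e:theta} is essential — is that the extremum away from the limit value $1$ is attained at $t=1$ and equals $(\alpha+1)/2$. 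Plugging in $\alpha=(1\pm\varepsilon)^2$ yields
\begin{equation*}
  \frac{(1-\varepsilon)^2+1}{2}\le\frac{\underline{\tan\theta}^2+1}{\sec^2\theta}\le\frac{(1+\varepsilon)^2+1}{2}.
\end{equation*}

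Taking square roots and multiplying by the $\hypot$ factor $1+\epsilon_h\in[1-\varepsilon,1+\varepsilon]$ delivers the two-sided bound on $\delta_\theta'=\underline{\sec\theta}/\sec\theta$ stated in~\eqref{e:sectheta}. I would also briefly note that $\underline{\sec\theta}$ is finite: Corollary~\ref{c:sec} together with $\underline{\tan\theta}\le 1$ (Corollary~\ref{c:tf}) ensures $\hypot(\underline{\tan\theta},1)\le\hypot(1,1)\le 2$, well within the normal range under the hypotheses of Lemma~\ref{l:nu}.

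The only mildly delicate step is the extremum analysis: one must verify that both the upper bound (taken with $\epsilon_\theta=+\varepsilon$, giving $\alpha>1$ and maximum at $t=1$) and the lower bound (taken with $\epsilon_\theta=-\varepsilon$, giving $\alpha<1$ and minimum at $t=1$) are simultaneously realised at the same endpoint $t=1$, which is what produces the symmetric-looking pair of bounds. Everything else is routine algebra and the use of correctly-rounded operations.
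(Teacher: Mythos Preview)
Your argument is correct and follows essentially the same route as the paper's proof: both decompose $\underline{\sec\theta}$ into the inherited $(1+\epsilon_\theta)$ factor on $\tan\theta$ and the correctly-rounded $\hypot$ factor, then bound the ratio $(\underline{\tan\theta}^2+1)/(\tan^2\theta+1)$ using the constraint $\tan^2\theta\le 1$. The only cosmetic difference is that the paper writes this ratio as $1+\frac{\tan^2\theta}{\tan^2\theta+1}(\delta_\theta^2-1)$ and bounds the coefficient $\frac{\tan^2\theta}{\tan^2\theta+1}\in(0,\tfrac12]$ directly, whereas you compute a derivative to reach the same extremum $(\alpha+1)/2$ at $t=1$.
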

\begin{proof}
  Let $\delta_{\theta}^{}=(1+\epsilon_{\theta}^{})$.  Then
  $\underline{\tan\theta}\ge\mu$ implies
  $(\underline{\tan\theta})^2=\delta_{\theta}^2\tan^2\theta$, and
  \begin{displaymath}
    1-\varepsilon=\delta_{\theta}^-\le\delta_{\theta}^{}\le\delta_{\theta}^+=1+\varepsilon.
  \end{displaymath}
  Express
  $(\underline{\tan\theta})^2+1=\delta_{\theta}^2\tan^2\theta+1$ as
  $(\tan^2\theta+1)(1+\epsilon_{\theta}')$, from which it follows
  \begin{displaymath}
    \epsilon_{\theta}'=\frac{\tan^2\theta}{\tan^2\theta+1}(\delta_{\theta}^2-1),\qquad
    0<\theta\le\frac{\pi}{4}\implies 0<\frac{\tan^2\theta}{\tan^2\theta+1}\le\frac{1}{2}.
  \end{displaymath}
  By adding unity to both sides of the equation for
  $\epsilon_{\theta}'$ and taking the maximal value of the first
  factor on its right hand side, while accounting for the bounds of
  $\delta_{\theta}^{}$, it holds
  \begin{displaymath}
    ((\delta_{\theta}^-)^2+1)/2\le 1+\epsilon_{\theta}'\le((\delta_{\theta}^+)^2+1)/2.
  \end{displaymath}
  Since
  $\underline{\sec\theta}=\hypot(\underline{\tan\theta},1)=\sqrt{(\underline{\tan\theta})^2+1}(1+\epsilon_{\sqrt{\null}}^{})=\sqrt{(\tan^2\theta+1)(1+\epsilon_{\theta}')}(1+\epsilon_{\sqrt{\null}}^{})$,
  where $|\epsilon_{\sqrt{\null}}|\le\varepsilon$, factorizing the
  last square root into a product of square roots gives
  \begin{displaymath}
    \underline{\sec\theta}=\sec\theta\sqrt{1+\epsilon_{\theta}'}(1+\epsilon_{\sqrt{\null}}^{}).
  \end{displaymath}
  The proof is concluded by denoting the error factor on the right
  hand side by $\delta_{\theta}'$.
\end{proof}

This proof and the following one use several techniques
from~\cite[Theorem~1]{Novakovic-24}.  Due to the structure of a matrix
that $U_{\theta}^T$ or $V_{\theta}^{}$ is applied to, containing in
each row and column one zero and one $\pm 1$,  it follows that $U$ or
$V$ have in each row and column $\pm\underline{\cos\theta}$ and
$\pm\underline{\sin\theta}$, computed implicitly, for which
Lemma~\ref{l:sincostheta} gives error bounds.

\begin{lemma}\label{l:sincostheta}
  Let $\underline{\cos\theta}$ and $\underline{\sin\theta}$ result
  from applying~\eqref{e:Givens} with $\underline{\tan\theta}\ge\mu$.
  Then,
  \begin{equation}
    \underline{\cos\theta}=\delta_{\theta}''\cos\theta,\quad
    \delta_{\theta}''=(1+\epsilon_/^{})/\delta_{\theta}',\qquad
    \underline{\sin\theta}=\delta_{\theta}'''\sin\theta,\quad
    \delta_{\theta}'''=(1+\epsilon_/')\delta_{\theta}^{}/\delta_{\theta}',
    \label{e:sincostheta}
  \end{equation}
  where $\max\{|\epsilon_/^{}|,|\epsilon_/'|\}\le\varepsilon$ and
  $\delta_{\theta}''$ and $\delta_{\theta}'''$ can be bound below and
  above in the terms of $\varepsilon$ only.  Let
  $\delta_{\theta}^{\prime-}$ and $\delta_{\theta}^{\prime+}$ be the
  lower and the upper bounds for $\delta_{\theta}'$
  from~\eqref{e:sectheta}.  Then,
  \begin{equation}
    (1-\varepsilon)/\delta_{\theta}^{\prime+}\le\delta_{\theta}''\le(1+\varepsilon)/\delta_{\theta}^{\prime-},\qquad
    (1-\varepsilon)\delta_{\theta}^-/\delta_{\theta}^{\prime+}\le\delta_{\theta}'''\le(1+\varepsilon)\delta_{\theta}^+/\delta_{\theta}^{\prime-}.
    \label{e:sincosthetabound}
  \end{equation}
\end{lemma}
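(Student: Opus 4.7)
The plan is to read each of $\underline{\cos\theta}$ and $\underline{\sin\theta}$ off as the outcome of a single floating-point division, and then chase the relative errors through Lemma~\ref{l:sectheta}.

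First, I would observe that formula~\eqref{e:Givens} stores the rotation implicitly through the pair $(\underline{\tan\theta},\underline{\sec\theta})$, and when it is applied to a matrix whose entries are $\pm 1$ or $0$ (the structure noted immediately before the lemma), the only operations that produce the numerical analogues of $\cos\theta$ and $\sin\theta$ are
\[
\underline{\cos\theta}=\fl(1/\underline{\sec\theta}),\qquad\underline{\sin\theta}=\fl(\underline{\tan\theta}/\underline{\sec\theta}).
\]
Since $0\le\underline{\tan\theta}\le 1$ by Corollary~\ref{c:tf} and the discussion following~\eqref{e:theta}, Corollary~\ref{c:sec} gives $\underline{\sec\theta}\in[1,\sqrt{2}]$, so both quotients have magnitude at most $1$ and each rounding is by a factor $1+\epsilon_/$ or $1+\epsilon_/'$ with $|\epsilon_/|,|\epsilon_/'|\le\varepsilon$.

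Next I would substitute $\underline{\sec\theta}=\delta_\theta'\sec\theta$ from Lemma~\ref{l:sectheta} and $\underline{\tan\theta}=\delta_\theta\tan\theta$ with $\delta_\theta=1+\epsilon_\theta$ (justified just before~\eqref{e:Givens} under the hypothesis $\underline{\tan\theta}\ge\mu$). For the cosine this gives
\[
\underline{\cos\theta}=\frac{1+\epsilon_/}{\delta_\theta'\sec\theta}=\frac{1+\epsilon_/}{\delta_\theta'}\cos\theta,
\]
which is the claimed form with $\delta_\theta''=(1+\epsilon_/)/\delta_\theta'$. The analogous substitution for the sine yields
\[
\underline{\sin\theta}=\frac{(1+\epsilon_/')\,\delta_\theta\tan\theta}{\delta_\theta'\sec\theta}=\frac{(1+\epsilon_/')\,\delta_\theta}{\delta_\theta'}\sin\theta,
\]
so that $\delta_\theta'''=(1+\epsilon_/')\delta_\theta/\delta_\theta'$, again exactly as asserted in~\eqref{e:sincostheta}.

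Finally, to establish~\eqref{e:sincosthetabound} I would replace each factor in the numerator and denominator of $\delta_\theta''$ and $\delta_\theta'''$ by its sharpest bound, using $1-\varepsilon\le 1+\epsilon_/,\,1+\epsilon_/'\le 1+\varepsilon$, the bounds $\delta_\theta^-=1-\varepsilon\le\delta_\theta\le\delta_\theta^+=1+\varepsilon$ recorded in the proof of Lemma~\ref{l:sectheta}, and $\delta_{\theta}^{\prime-}\le\delta_\theta'\le\delta_{\theta}^{\prime+}$ from that same lemma. The resulting two-sided inequalities are then rewritten purely in terms of $\varepsilon$ by inserting the closed forms of $\delta_{\theta}^{\prime\pm}$. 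No real obstacle arises; the only thing to be careful about is that the normality hypothesis $\underline{\tan\theta}\ge\mu$ is genuinely used twice---once to obtain the relative-error bound on $\underline{\tan\theta}$ itself, and, through Lemma~\ref{l:sectheta}, to keep $\underline{\sec\theta}$ in the normal range---so that both divisions pick up only a relative rounding error bounded by $\varepsilon$ rather than the larger absolute error that would accompany subnormal operands.
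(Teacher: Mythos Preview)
Your proposal is correct and follows essentially the same route as the paper: the paper's proof is the single sentence ``The claims follow from~\eqref{e:sectheta}, $\underline{\cos\theta}=\fl(1/\underline{\sec\theta})$, and $\underline{\sin\theta}=\fl(\underline{\tan\theta}/\underline{\sec\theta})$,'' and you have simply unpacked that line with the substitutions $\underline{\sec\theta}=\delta_\theta'\sec\theta$ and $\underline{\tan\theta}=\delta_\theta\tan\theta$ and then bounded each factor. One small remark: your appeal to Corollary~\ref{c:tf} is misplaced (that corollary concerns $\tan\phi$ obtained from $\tan(2\phi)$, not the $\tan\theta$ of~\eqref{e:theta}); the bound $0\le\underline{\tan\theta}\le 1$ you need is exactly the ``discussion following~\eqref{e:theta}'' that you also cite, so just drop the reference to the corollary.
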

\begin{proof}
  The claims follow from~\eqref{e:sectheta},
  $\underline{\cos\theta}=\fl(1/\underline{\sec\theta})$, and
  $\underline{\sin\theta}=\fl(\underline{\tan\theta}/\underline{\sec\theta})$.
\end{proof}

If, in~\eqref{e:theta}, $\underline{\tan\theta}<\mu$, then
$\underline{\sec\theta}=1$ since
$1\le\underline{\sec\theta}\le\fl(\sqrt{1+\mu^2})\le\fl(1+\mu)=1$, and the
relative error in $\underline{\sec\theta}$ is below $\varepsilon$ for
any standard floating-point datatype.  Thus, even though
$\underline{\tan\theta}$ can be relatively inaccurate,
\eqref{e:sectheta} holds for all $\underline{\sec\theta}$.
Also, $\underline{\cos\theta}$ is always relatively
accurate, but $\underline{\sin\theta}$ might not be if
$\underline{\tan\theta}<\mu$, when
$\underline{\sin\theta}=\underline{\tan\theta}$.
\subsection{A (pivoted) $URV$ factorization of order two ($\mathfrak{t}'\cong 13,15$)}\label{ss:3.2}
If, after the prescaling, $\mathfrak{t}'\cong 13$ or
$\mathfrak{t}'=15$, $\underline{G'}$ is transformed into an upper
triangular matrix $R$ with all elements non-negative, i.e., a special
URV factorization of $\underline{G'}$ is computed.
Section~\ref{sss:3.2.1} deals with the $\mathfrak{t}'\cong 13$, and
Section~\ref{sss:3.2.2} with the $\mathfrak{t}'=15$ case.
\subsubsection{An error-free transformation from $\mathfrak{t}'\cong 13$ to $\mathfrak{t}''=13$ form}\label{sss:3.2.1}
A triangular or anti-triangular matrix is first permuted into an upper
triangular one, $G''$.  Its first row is then multiplied by the sign
of $g_{11}''$.  This might change the sign of $g_{12}''$.  The second
column is multiplied by its new sign, what might change the
sign of $g_{22}''$.  Its new sign then multiplies the second row, what
completes the construction of $R$.

The transformations $U_+^T$ and $V_+^{}$, such that
$R=U_+^T G' V_+^{}$, can be expressed as
$U_+^{}=P_U^{}S_{11}^{}S_{22}^{}=\underline{U_+^{}}$ and
$V_+^{}=P_V^{}S_{12}^{}=\underline{V_+^{}}$, and are exact, as well as
$\underline{R}$ if $\underline{G'}$ is.
\subsubsection{A fully pivoted URV when $\mathfrak{t}'=15$}\label{sss:3.2.2}
In all previous cases, a sequence of error-free transformations would
bring $G'$ into an upper triangular $G''$, of which \texttt{xLASV2}
can compute the SVD\@.  However, a matrix without zeros either has to
be preprocessed into such a form, in the spirit
of~\cite{Novakovic-20,Novakovic-Singer-22}, or its SVD has to computed
by more complicated and numerically less stable formulas, that follow
from the annihilation requirement for the off-diagonal matrix elements
as
\begin{equation}
  \frac{\tan(2\phi)}{2}=\frac{g_{11}^{}g_{21}^{}+g_{12}^{}g_{22}^{}}{g_{11}^2+g_{12}^2-g_{21}^2-g_{22}^2},\qquad
  \tan\psi=\frac{g_{12}^{}+g_{22}^{}\tan\phi}{g_{11}^{}+g_{21}^{}\tan\phi}.
  \label{e:trad}
\end{equation}
A sketched derivation of~\eqref{e:trad} can be found in Section~1 of
the supplementary material.

Opting for the first approach, compute the Frobenius norms of the
columns of $G'$, as $w_1^{}$ and $w_2^{}$.  Due to the prescaling,
$\underline{w_1^{}}=\hypot(\underline{g_{11}'},\underline{g_{21}'})$
and
$\underline{w_2^{}}=\hypot(\underline{g_{12}'},\underline{g_{22}'})$
cannot overflow.  If $w_1^{}<w_2^{}$, swap the columns and their norms
(so that $w_1'$ would be the norm of the new first column of
$\widetilde{G}'$, and $w_2'$ the norm of the second one).  Multiply
each row by the sign of its new first element to get $G''$.  Swap the
rows if $g_{11}''<g_{21}''$ to get the fully pivoted $G'''$, while the
norms remain unchanged.  Note that $g_{11}'''\ge g_{21}'''>0$.

Now the QR factorization of $G'''$ is computed as
$U_{\vartheta}^TG'''=R''$.  Then, $r_{21}''=0$, and
\begin{equation}
  r_{11}''=w_1',\quad
  \tan\vartheta=\frac{g_{21}'''}{g_{11}'''},\quad
  r_{12}''=\frac{g_{12}'''+g_{22}'''\tan\vartheta}{\sec\vartheta},\quad
  r_{22}''=\frac{g_{22}'''-g_{12}'''\tan\vartheta}{\sec\vartheta}.
  \label{e:QR15}
\end{equation}
All properties of the functions of $\theta$ from Section~\ref{ss:3.1}
also hold for the functions of $\vartheta$.  The prescaling of $G$
causes the elements of $R''$ to be at most $\nu/(2\sqrt{2})$ in
magnitude.

If $\underline{r_{12}''}=0$, then $\mathfrak{t}''=9$, and if
$\underline{r_{22}''}=0$, then $\mathfrak{t}''=5$.  In either case
$\underline{R''}$ is processed further as in Section~\ref{ss:3.1},
while accounting for the already applied transformations.  Else, the
second column of $R''$ is multiplied by the sign of $r_{12}''$ to
obtain $R'$.  The second row of $R'$ is multiplied by the sign of
$r_{22}'$ to finalize $R$, in which the upper triangle is positive.
It is evident how to construct $U_+^{}$ and
$V_+^{}=\underline{V_+^{}}$ such that $R=U_+^T G' V_+^{}$, since
\begin{equation}
  U_+^T=S_{22}^TU_{\vartheta}^TP_U^TS_1^T,\quad
  S_1^{}=\diag(\sign(\tilde{g}_{11}'),\sign(\tilde{g}_{21}')),\qquad
  V_+^{}=P_V^{}S_{12}^{}.
  \label{e:UV15}
\end{equation}
However, $\underline{U_+^T}$ is \emph{not} explicitly formed, as
explained in Section~\ref{ss:3.3}.  Now $\mathfrak{t}''=13$ for
$\underline{R}$.

Lemma~\ref{l:R} bounds the relative errors in (some of) the elements
of $\underline{R}$, when possible.

\begin{lemma}\label{l:R}
  Assume that no inexact underflow occurs at any stage of the above
  computation, leading from $G$ to $\underline{R}$.  Then,
  $\underline{r_{11}}=\delta_0 r_{11}$, where
  \begin{equation}
    1-\varepsilon=\delta_0^-\le\delta_0^{}\le\delta_0^+=1+\varepsilon.
    \label{e:d0}
  \end{equation}
  If in $\underline{\tan\vartheta}=\delta_{\vartheta}^{}\tan\vartheta$
  holds $\delta_{\vartheta}^{}=1$, then
  $\underline{r_{12}^{}}=\delta_1'r_{12}^{}$ and
  $\underline{r_{22}^{}}=\delta_1''r_{22}^{}$, where
  \begin{equation}
    \frac{(1-\varepsilon)^2}{\sqrt{((1+\varepsilon)^2+1)/2}(1+\varepsilon)}=\delta_1^-\le\delta_1',\delta_1''\le\delta_1^+=\frac{(1+\varepsilon)^2}{\sqrt{((1-\varepsilon)^2+1)/2}(1-\varepsilon)}.
    \label{e:d1}
  \end{equation}
  Else, if $\underline{g_{12}'''}$ and $\underline{g_{22}'''}$ are of
  the same sign, then $\underline{r_{12}^{}}=\delta_2'r_{12}^{}$, and
  if they are of the opposite signs, then
  $\underline{r_{22}^{}}=\delta_2''r_{22}^{}$, where, with
  $1-\varepsilon=\delta_{\vartheta}^-\le\delta_{\vartheta}^{}\le\delta_{\vartheta}^+=1+\varepsilon$
  and $\delta_{\vartheta}^{}\ne 1$,
  \begin{equation}
    \frac{(1-\varepsilon)^3}{\sqrt{((1+\varepsilon)^2+1)/2}(1+\varepsilon)}=\delta_2^-<\delta_2',\delta_2''<\delta_2^+=\frac{(1+\varepsilon)^3}{\sqrt{((1-\varepsilon)^2+1)/2}(1-\varepsilon)}.
    \label{e:d2}
  \end{equation}
\end{lemma}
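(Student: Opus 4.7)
The plan is to follow the computation from $G$ to $\underline{R}$ step by step and track the compounded relative errors, using that the no-underflow hypothesis makes the prescaling and every permutation, row/column swap, and sign extraction described in Section~\ref{sss:3.2.2} error-free. Only four kinds of floating-point operations can contribute rounding: the two $\hypot$ calls used to form $\underline{w_1'}=\underline{r_{11}}$ and $\underline{\sec\vartheta}$, the division that produces $\underline{\tan\vartheta}$, and the single FMA together with one division that evaluates each of $\underline{r_{12}''}$ and $\underline{r_{22}''}$; the subsequent sign-normalizations that take $R''$ to $R$ preserve magnitudes exactly.

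First I would dispatch $\underline{r_{11}}$: since $r_{11}''=w_1'$ is obtained from a single correctly rounded $\hypot$ of exact operands and the sign normalizations leave the first diagonal entry untouched, its relative error is at most $\varepsilon$, which is exactly~\eqref{e:d0}. For the case $\delta_\vartheta^{}=1$ I would observe that $\fma(\underline{g_{22}'''},\underline{\tan\vartheta},\underline{g_{12}'''})$ returns the exact numerator $g_{12}'''+g_{22}'''\tan\vartheta$ rounded once, so combining with $\underline{\sec\vartheta}$, whose error factor $\delta_\vartheta'$ is furnished by Lemma~\ref{l:sectheta} applied with $\vartheta$ in place of $\theta$, and with one final division rounding yields the error factor $(1+\epsilon_{\fma})(1+\epsilon_/)/\delta_\vartheta'$. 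Plugging in the two $(1\pm\varepsilon)$ bounds on top and the bounds~\eqref{e:sectheta} for $\delta_\vartheta'$ on the bottom produces exactly the envelope~\eqref{e:d1}, and a verbatim argument based on $g_{22}'''-g_{12}'''\tan\vartheta$ handles $\underline{r_{22}^{}}$.

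The main obstacle is the case $\delta_\vartheta^{}\ne 1$, where $\underline{\tan\vartheta}$ carries its own rounding error and the numerator becomes $g_{12}'''+g_{22}'''\delta_\vartheta^{}\tan\vartheta$. To prevent $\delta_\vartheta^{}-1$ from being amplified by near-cancellation I would use the algebraic identity
\begin{displaymath}
  g_{12}'''+g_{22}'''\delta_\vartheta^{}\tan\vartheta=(g_{12}'''+g_{22}'''\tan\vartheta)\left(1+\frac{g_{22}'''\tan\vartheta}{g_{12}'''+g_{22}'''\tan\vartheta}(\delta_\vartheta^{}-1)\right).
\end{displaymath}
Since $\tan\vartheta>0$ by the construction of the QR step and $g_{12}'''$, $g_{22}'''$ share a sign by hypothesis, the ratio multiplying $(\delta_\vartheta^{}-1)$ lies strictly in $(0,1)$, so the numerator picks up one additional relative factor $(1+\epsilon')$ with $|\epsilon'|<|\delta_\vartheta^{}-1|\le\varepsilon$. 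Folding this strictly bounded factor into the chain $(1+\epsilon_{\fma})(1+\epsilon_/)/\delta_\vartheta'$ from the $\delta_\vartheta^{}=1$ analysis introduces one more $(1\pm\varepsilon)$ on each side and hence the cube $(1\pm\varepsilon)^3$ in the numerator of~\eqref{e:d2}, with the inequalities strict because of the strict bound on $|\epsilon'|$. The symmetric treatment for $\underline{r_{22}^{}}$ uses that, when $g_{12}'''$ and $g_{22}'''$ have opposite signs, $g_{22}'''-g_{12}'''\tan\vartheta$ becomes a same-sign sum, so the analogous ratio is again strictly below one and the identical absorption argument applies.
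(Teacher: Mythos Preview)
Your proposal is correct and follows essentially the same approach as the paper: both use the correct rounding of $\hypot$ for~\eqref{e:d0}, the same algebraic identity to absorb $\delta_\vartheta^{}$ into a factor $1+\epsilon_\pm^{}$ whose magnitude is controlled by the ratio $\pm y\tan\vartheta/(x\pm y\tan\vartheta)\in(0,1)$ under the stated sign conditions, and then chain the FMA rounding, the division rounding, and the bound~\eqref{e:sectheta} on $\delta_\vartheta'$ from Lemma~\ref{l:sectheta} to obtain~\eqref{e:d1} and~\eqref{e:d2}. The only cosmetic difference is that the paper treats $\delta_\vartheta^{}=1$ as the special case $\delta_\pm^{}=1$ of the general decomposition $\delta_2^{}=\delta_1^{}\delta_\pm^{}$, whereas you handle it first and then append the extra factor; the content is the same.
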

\begin{proof}
  Eq.~\eqref{e:d0} follows from the correct rounding of $\hypot$
  in the computation of $\underline{w_1'}$.

  To prove~\eqref{e:d1} and~\eqref{e:d2}, solve
  $x\pm y\delta_{\vartheta}\tan\vartheta=(x\pm y\tan\vartheta)(1+\epsilon_{\pm})$
  for $\epsilon_{\pm}$ with $xy\ne 0$.  After expanding and
  rearranging the terms, it follows that
  \begin{equation}
    \epsilon_{\pm}=\frac{\pm y\tan\vartheta}{x\pm y\tan\vartheta}(\delta_{\vartheta}-1),\qquad
    \delta_{\vartheta}=1+\epsilon_{\vartheta},\quad
    |\epsilon_{\vartheta}|\le\varepsilon.
    \label{e:epm}
  \end{equation}
  If $x$ and $y$ are of the same sign and the addition operation is
  chosen, the first factor on the first right hand side
  in~\eqref{e:epm} is above zero and below unity, so
  $|\epsilon_{\pm}|<|\delta_{\vartheta}-1|$ and
  \begin{equation}
    \delta_{\vartheta}^-=\delta_{\pm}^-<\delta_{\pm}^{}<\delta_{\pm}^+=\delta_{\vartheta}^+,\qquad
    \delta_{\pm}^{}=1+\epsilon_{\pm}^{}.
    \label{e:dpm}
  \end{equation}
  The same holds if $x$ and $y$ are of the opposite signs and the
  subtraction is taken instead.  Namely, from~\eqref{e:QR15}, the
  bound~\eqref{e:dpm} holds for $x'=\underline{g_{12}'''}$ and
  $y'=\underline{g_{22}'''}$ of the \emph{same} sign,
  \begin{equation}
    \sign(x'=\underline{g_{12}'''})=\sign(y'=\underline{g_{22}'''})\implies\delta_{\pm}^{}\mapsto\delta_+^{},
    \label{e:dp}
  \end{equation}
  i.e., with $\delta_{\pm}$ denoted as $\delta_+$, and for
  $x''=\underline{g_{22}'''}$ and $y''=\underline{g_{12}'''}$ of the
  \emph{opposite} signs,
  \begin{equation}
    \sign(x''=\underline{g_{22}'''})=-\sign(y''=\underline{g_{12}'''})\implies\delta_{\pm}^{}\mapsto\delta_-^{},
    \label{e:dm}
  \end{equation}
  with $\delta_{\pm}$ becoming $\delta_-$ in this case.  When both
  cases are considered together, $\delta_{\pm}$ is used.

  With $|\epsilon_{\fma}|\le\varepsilon$ and
  $\delta_{\fma}=1+\epsilon_{\fma}$, from the definition of
  $\delta_{\pm}$ it follows that
  \begin{equation}
    \fma(\pm y,\underline{\tan\vartheta},x)=(x\pm y\underline{\tan\vartheta})\delta_{\fma}=(x\pm y\tan\vartheta)\delta_{\fma}\delta_{\pm}.
    \label{e:xpmy}
  \end{equation}
  Due to~\eqref{e:sectheta} and~\eqref{e:xpmy}, with $\vartheta$
  instead of $\theta$ and with $\delta_/''=(1+\epsilon_/'')$ where
  $|\epsilon_/''|\le\varepsilon$, it holds
  \begin{equation}
    \fl\left(\frac{\fma(y',\underline{\tan\vartheta},x')}{\underline{\sec\vartheta}}\right)=\frac{x'+y'\tan\vartheta}{\sec\vartheta}\cdot\frac{\delta_{\fma}^{}\delta_/''}{\delta_{\vartheta}'}\cdot\delta_+^{}=\underline{r_{12}''}\cdot\delta_1'\cdot\delta_+^{}=\underline{r_{12}''}\delta_2',
    \label{e:d1d2p}
  \end{equation}
  in the case~\eqref{e:dp}, while in the case~\eqref{e:dm} it is
  similarly obtained that
  \begin{equation}
    \fl\left(\frac{\fma(-y'',\underline{\tan\vartheta},x'')}{\underline{\sec\vartheta}}\right)=\frac{x''-y''\tan\vartheta}{\sec\vartheta}\cdot\frac{\delta_{\fma}^{}\delta_/''}{\delta_{\vartheta}'}\cdot\delta_-^{}=\underline{r_{22}''}\cdot\delta_1''\cdot\delta_-^{}=\underline{r_{22}''}\delta_2''.
    \label{e:d1d2m}
  \end{equation}
  Now~\eqref{e:d1} follows from bounding $\delta_1'$ and $\delta_1''$,
  standing for $\delta_{\fma}^{}\delta_/''/\delta_{\vartheta}'$
  in~\eqref{e:d1d2p} and~\eqref{e:d1d2m}, respectively, below and
  above using~\eqref{e:sectheta}.  Since
  $\delta_2'=\delta_1'\delta_+^{}$ in~\eqref{e:d1d2p}, and
  $\delta_2''=\delta_1''\delta_-^{}$ in~\eqref{e:d1d2m}, the proof is
  concluded by observing that~\eqref{e:d2} is a consequence
  of~\eqref{e:d1}.
\end{proof}

Lemma~\ref{l:R} thus shows that high relative accuracy of all elements
of $\underline{R}$ is achieved if no underflow has occurred at any
stage, and $\underline{\tan\vartheta}$ has been computed exactly.  If
$\underline{\tan\vartheta}$ is inexact, high relative accuracy is
guaranteed for $\underline{r_{11}}$ and exactly one of
$\underline{r_{12}}$ and $\underline{r_{22}}$.  If it is also desired
for the remaining element, transformed by an essential subtraction and
thus amenable to cancellation, one possibility is to compute it by an
expression equivalent to~\eqref{e:QR15}, but with $\tan\vartheta$
expanded to its definition $g_{21}'''/g_{11}'''$, as in
\begin{equation}
  r_{12}''=(g_{12}'''g_{11}'''+g_{22}'''g_{21}''')/(g_{11}'''\sec\vartheta),\quad
  r_{22}''=(g_{22}'''g_{11}'''-g_{12}'''g_{21}''')/(g_{11}'''\sec\vartheta),
  \label{e:ext}
\end{equation}
after prescaling the numerator and denominator in~\eqref{e:ext} by the
largest power-of-two that avoids overflow of both of them.  A
floating-point primitive of the form $a\cdot b\pm c\cdot d$ with a
single rounding~\cite{Lauter-17} can give the correctly rounded
numerator but it has to be emulated in software on most platforms at
present~\cite{Hubrecht-et-al-24}.  For high relative accuracy of
$\underline{r_{12}''}$ or $\underline{r_{22}''}$ the absence of
inexact underflows is required, except in the prescaling.

Alternatively, the numerators in~\eqref{e:ext} can be calculated by the
Kahan's algorithm for determinants of order
two~\cite{Jeannerod-et-al-13}, but an overflow-avoiding prescaling is
still necessary.  It is thus easier to resort to
computing~\eqref{e:ext} in a wider and more precise datatype as
\begin{equation}
  \begin{aligned}
    \underline{r_{12}''}&=\fl(\fl(\FL(\FL(g_{12}'''g_{11}'''+g_{22}'''g_{21}''')/g_{11}'''))/\underline{\sec\vartheta}),\\
    \underline{r_{22}''}&=\fl(\fl(\FL(\FL(g_{22}'''g_{11}'''-g_{12}'''g_{21}''')/g_{11}'''))/\underline{\sec\vartheta}).
  \end{aligned}
  \label{e:EXT}
\end{equation}

First, in~\eqref{e:EXT} it is assumed that no underflow has occurred so
far, so $\underline{G'''}=G'''$.  Second, a product of two
floating-point values requires $2p$ bits of the significand to be
represented exactly if the factors' significands are encoded with $p$
bits each.  Thus, for single precision, $48$ bits are needed, what is
less than $53$ bits available in double precision.  Similarly, for
double precision, $106$ bits are needed, what is less than $113$ bits
of a quadruple precision significand.  Therefore, every product
in~\eqref{e:EXT} is exact if computed using a more precise standard
datatype \texttt{T}.  The characteristic values of \texttt{T} are the
underflow and overflow thresholds $\mu_{\text{\texttt{T}}}$ and
$\nu_{\text{\texttt{T}}}$, and
$\varepsilon_{\text{\texttt{T}}}=2^{-p_{\text{\texttt{T}}}}$.  Third,
let $\FL(x)$ round an infinitely precise result of $x$ to the nearest
value in \texttt{T}.  Since all addends in~\eqref{e:EXT} are exact and
way above the underflow threshold $\mu_{\text{\texttt{T}}}$ by
magnitude, the rounded result of the addition or the subtraction is
relatively accurate, with the error factor $(1+\epsilon_{\pm}')$,
$|\epsilon_{\pm}'|\le\varepsilon_{\text{\texttt{T}}}^{}$.  This holds
even if the result is zero, but since the transformed matrix would
then be processed according to its new structure, assume that the
result is normal in \texttt{T}.  The ensuing division cannot overflow
nor underflow in \texttt{T}.  Now the quotient rounded by $\FL$ is
rounded again, by $\fl$, back to the working datatype.  This operation
can underflow, as well as the following division by
$\underline{\sec\vartheta}$.  If they do not, the resulting
transformed element is relatively accurate.  This outlines the
proof of Theorem~\ref{t:R}.

\begin{theorem}\label{t:R}
  Assume that no underflow occurs at any stage of the computation
  leading from $G$ to $\underline{R}$.  Then,
  $\underline{r_{11}}=\delta_0 r_{11}$, where for $\delta_0$
  holds~\eqref{e:d0}.  If $\underline{\tan\vartheta}$ is exact, then
  $\underline{r_{12}^{}}=\delta_1'r_{12}^{}$ and
  $\underline{r_{22}^{}}=\delta_1''r_{22}^{}$, where $\delta_1'$ and
  $\delta_1''$ are as in~\eqref{e:d1}.   Else, if
  $\underline{g_{12}'''}$ and $\underline{g_{22}'''}$ are of the same
  sign, then $\underline{r_{12}^{}}=\delta_2'r_{12}^{}$ and
  $\underline{r_{22}^{}}=\delta_3'r_{22}^{}$.  If they are of the
  opposite signs, then $\underline{r_{22}^{}}=\delta_2''r_{22}^{}$ and
  $\underline{r_{12}^{}}=\delta_3''r_{12}^{}$, where $\delta_2'$ and
  $\delta_2''$ are as in~\eqref{e:d2}, while $\delta_3'$ and
  $\delta_3''$ come from evaluating their corresponding matrix
  elements as in~\eqref{e:EXT} and are bounded as
  \begin{equation}
    \frac{(1-\varepsilon_{\text{\texttt{T}}}^{})(1-\varepsilon)^2}{\sqrt{((1+\varepsilon)^2+1)/2}(1+\varepsilon)}=\delta_3^-\le\delta_3',\delta_3''\le\delta_3^+=\frac{(1+\varepsilon_{\text{\texttt{T}}}^{})(1+\varepsilon)^2}{\sqrt{((1-\varepsilon)^2+1)/2}(1-\varepsilon)}.
    \label{e:d3}
  \end{equation}
\end{theorem}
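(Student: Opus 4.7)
The plan is to split Theorem~\ref{t:R} into the parts already certified by Lemma~\ref{l:R} and the genuinely new content, namely the bounds~\eqref{e:d3} on the error factors $\delta_3'$ and $\delta_3''$ of the elements that would otherwise suffer cancellation in~\eqref{e:QR15} and are therefore evaluated by the mixed-precision formula~\eqref{e:EXT}. The assertions for $\delta_0$, $\delta_1'$, $\delta_1''$, $\delta_2'$, and $\delta_2''$ are immediate reuses of Lemma~\ref{l:R}, so I would cite that lemma verbatim and concentrate the argument on the $\text{\texttt{T}}$-based evaluation; by symmetry it then suffices to treat $\underline{r_{12}''}$, the $\underline{r_{22}''}$ case being obtained by exchanging the sign.

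For~\eqref{e:d3} I would track one rounding at a time, following~\eqref{e:EXT} from the innermost operation outwards. The narrative preceding the theorem already notes that $2p\le p_{\text{\texttt{T}}}$ for both standard upgrades, so each of the two products inside the innermost $\FL$ is formed exactly and that $\FL$ is transparent. The $\FL$ that rounds the exact sum (resp.\ difference) of these products to $\text{\texttt{T}}$ contributes one factor $(1+\epsilon_\pm^{\text{\texttt{T}}})$ with $|\epsilon_\pm^{\text{\texttt{T}}}|\le\varepsilon_{\text{\texttt{T}}}^{}$; in the sign pattern assigned by the theorem no cancellation is possible, so this is a genuine small relative perturbation and not a ratio to a spurious small number. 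The $\FL$-division by $g_{11}'''$, which is itself exactly representable in $\text{\texttt{T}}$, adds a second $\text{\texttt{T}}$-factor of the same size. The $\fl$ that rounds back to the working datatype introduces $(1+\epsilon_1)$ with $|\epsilon_1|\le\varepsilon$, and the final $\fl$-division by $\underline{\sec\vartheta}$ introduces $(1+\epsilon_2)$ of the same size. Lemma~\ref{l:sectheta} supplies $\underline{\sec\vartheta}=\delta_\vartheta'\sec\vartheta$, so after multiplying the four rounding factors and dividing by $\delta_\vartheta'$, the identity that~\eqref{e:ext} is the algebraically exact restatement of~\eqref{e:QR15} collapses the leading rational expression to the true $r_{12}''$. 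Taking extremal signs inside each parenthesis and invoking the bounds on $\delta_\vartheta'$ from~\eqref{e:sectheta} then yields~\eqref{e:d3}; the passage from $r_{12}''$ to $r_{12}$ is via exact sign flips, so the same bound is inherited.

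The main obstacle I foresee is not the error combinatorics — those reduce to a worst-case product of two $\varepsilon_{\text{\texttt{T}}}$-factors and two $\varepsilon$-factors in the numerator, over $\delta_\vartheta'$ in the denominator — but the range bookkeeping needed to justify that every rounding site really is the benign perturbation it is advertised to be. Under the blanket hypothesis ``no underflow occurs at any stage'', I would still need to verify that after the exact $\text{\texttt{T}}$-products the addition or subtraction cannot drop into the subnormal $\text{\texttt{T}}$-range — the only worrisome case here is precisely the one in which~\eqref{e:EXT} is deployed, which is where the sign-pattern branching in the statement earns its keep — and that the subsequent $\text{\texttt{T}}$-division and the two working-precision roundings also stay in-range given the prescaling controlled by $\mathfrak{s}$. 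Once in-rangeness is ironclad at each of the four rounding sites, the bounds~\eqref{e:d3} follow by a short arithmetic manipulation in the same spirit as the one closing the proof of Lemma~\ref{l:R}.
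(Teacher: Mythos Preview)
Your approach is essentially the paper's: cite Lemma~\ref{l:R} for $\delta_0,\delta_1',\delta_1'',\delta_2',\delta_2''$, and for~\eqref{e:d3} chain the four roundings in~\eqref{e:EXT} (two in $\text{\texttt{T}}$, two in the working precision) against $\underline{\sec\vartheta}=\delta_{\vartheta}'\sec\vartheta$ from Lemma~\ref{l:sectheta}.  The error counting is right and matches the paper's $(1\pm\varepsilon_{\text{\texttt{T}}})^2(1\pm\varepsilon)^2/\delta_{\vartheta}'$.

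One point is inverted, however.  You write that ``in the sign pattern assigned by the theorem no cancellation is possible'' at the $\FL$ of the sum or difference.  It is exactly the opposite: the element routed to~\eqref{e:EXT} is the one whose~\eqref{e:QR15} computation \emph{is} an essential subtraction.  When $g_{12}'''$ and $g_{22}'''$ share a sign, $r_{22}''$ is evaluated via $g_{22}'''g_{11}'''-g_{12}'''g_{21}'''$, a difference of two quantities of the same sign (since $g_{11}''',g_{21}'''>0$); when they have opposite signs, $r_{12}''$ is evaluated via $g_{12}'''g_{11}'''+g_{22}'''g_{21}'''$, a sum of quantities of opposite sign.  The reason the single $\FL$ still yields a factor $(1+\epsilon_{\pm}^{\text{\texttt{T}}})$ with $|\epsilon_{\pm}^{\text{\texttt{T}}}|\le\varepsilon_{\text{\texttt{T}}}$ is not absence of cancellation but that both addends are \emph{exact} in $\text{\texttt{T}}$ (which you already noted from $2p\le p_{\text{\texttt{T}}}$): rounding an exact real to nearest incurs at most $\varepsilon_{\text{\texttt{T}}}$ relative error regardless of how much the operands cancelled, provided the result is normal in $\text{\texttt{T}}$.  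Your subsequent chain is unaffected, but the stated rationale should be replaced.

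On the range bookkeeping you flag as the main obstacle: the paper carries it out explicitly rather than deferring to the blanket no-underflow hypothesis.  From the prescaling, every $|g_{ij}'''|\in[\mu,\nu/4]$ with at least one entry above $\nu/8$, so the exact products lie in $[\mu^2,\nu^2/16]\subset(\mu_{\text{\texttt{T}}},\nu_{\text{\texttt{T}}})$, their nonzero difference is at least $\varepsilon\mu\nu/8\gg\mu_{\text{\texttt{T}}}$, and the $\text{\texttt{T}}$-quotient by $g_{11}'''$ is at most $\nu/2$ and at least $\varepsilon\mu(1-\varepsilon_{\text{\texttt{T}}})/2>\mu_{\text{\texttt{T}}}$.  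This is what legitimises each of the four perturbation factors; you should include these estimates rather than leave them as a promissory note.
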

\begin{proof}
  It remains to prove~\eqref{e:d3}, since the other relations follow
  from Lemma~\ref{l:R}.

  Every element of $G'''$ is not above $\nu/4$ and not below $\mu$ in
  magnitude.  Therefore, a difference (in essence) of their exact
  products cannot exceed $\nu^2/16<\nu_{\text{\texttt{T}}}^{}$ in
  magnitude in a standard \texttt{T}.  At least one element is above
  $\nu/8$ in magnitude due to the prescaling, so the said difference,
  if not exactly zero, is above
  $\varepsilon\mu\nu/8\gg\mu_{\text{\texttt{T}}}^{}$ in magnitude.
  Thus, the quotient of this difference and $g_{11}'''$ is above
  $\varepsilon\mu(1-\varepsilon_{\text{\texttt{T}}}^{})/2>\mu_{\text{\texttt{T}}}^{}$
  and, due to the prescaling and pivoting, not above
  $\nu/2\ll\nu_{\text{\texttt{T}}}$ in magnitude.  For~\eqref{e:EXT}
  it therefore holds
  \begin{equation}
    \begin{gathered}
    \begin{aligned}
      \underline{\star_+^{}}&=\FL(\star_+^{})=\star_+^{}(1+\epsilon_+'),\quad
      \star_+^{}=g_{12}'''g_{11}'''+g_{22}'''g_{21}''',\quad
      |\epsilon_+'|\le\varepsilon_{\text{\texttt{T}}}^{},\\
      \underline{\star_-^{}}&=\FL(\star_-^{})=\star_-^{}(1+\epsilon_-'),\quad
      \star_-^{}=g_{22}'''g_{11}'''-g_{12}'''g_{21}''',\quad
      |\epsilon_-'|\le\varepsilon_{\text{\texttt{T}}}^{},
    \end{aligned}\\
    \fl(\FL(\underline{\star_{\pm}^{}}/g_{11}'''))=\fl(\underline{\star_{\pm}^{}}/g_{11}''')=(\star_{\pm}^{}/g_{11}''')(1+\epsilon_{\pm}')(1+\epsilon_/''')=(\star_{\pm}^{}/g_{11}''')\delta_{\pm}',
    \end{gathered}
    \label{e:star}
  \end{equation}
  with $|\epsilon_/'''|\le\varepsilon$, since the doubly-rounded
  quotient $\fl(\FL(\underline{\star_{\pm}^{}}/g_{11}'''))$ is
  correctly rounded to the working precision from the value of
  $\underline{\star_{\pm}^{}}/g_{11}'''$, where $\star_{\pm}^{}$ can
  be replaced by $\star_+^{}$ or $\star_-$.  This double rounding is
  innocuous with the \emph{standard} datatypes, i.e, when rounding
  from exact to double to single and from exact to quadruple to double
  precision, due to~\cite[Theorem~4]{Figueroa-95} and~\cite[Theorem~29
    and Table~II]{Roux-14}, but might not be for specialized,
  non-standard datatypes, for which the proposed method would have to
  be redesigned.

  For the final division by $\underline{\sec\vartheta}$, due
  to~\eqref{e:sectheta} and~\eqref{e:star}, it holds
  \begin{equation}
    \fl\left(\frac{\fl(\FL(\underline{\star_{\pm}^{}}/g_{11}'''))}{\underline{\sec\vartheta}}\right)=\frac{\star_{\pm}^{}}{g_{11}'''\sec\vartheta}\frac{\delta_{\pm}'}{\delta_{\vartheta}'}\delta_/'''',\quad
    \delta_/''''=(1+\epsilon_/''''),\quad|\epsilon_/''''|\le\varepsilon.
    \label{e:d3pm}
  \end{equation}
  By fixing the sign to $+$ or $-$ in the $\pm$ subscripts
  in~\eqref{e:star} and~\eqref{e:d3pm}, let
  $\delta_3'=\delta_-'\delta_/''''/\delta_{\vartheta}'$ and
  $\delta_3''=\delta_+'\delta_/''''/\delta_{\vartheta}'$.  Now bound
  $\delta_3'$ and $\delta_3''$ below by $\delta_3^-$ and above by
  $\delta_3^+$, by combining the appropriate lower and upper bounds
  for $\delta_{\pm}'$, $\delta_{\vartheta}'$ from
  Lemma~\ref{l:sectheta}, and $\delta_/''''$, where
  \begin{displaymath}
    \frac{(1-\varepsilon_{\text{\texttt{T}}}^{})(1-\varepsilon)\cdot(1-\varepsilon)}{\sqrt{((1+\varepsilon)^2+1)/2}(1+\varepsilon)}=\delta_3^-\le\delta_3',\delta_3''\le\delta_3^+=\frac{(1+\varepsilon_{\text{\texttt{T}}}^{})(1+\varepsilon)\cdot(1+\varepsilon)}{\sqrt{((1-\varepsilon)^2+1)/2}(1-\varepsilon)},
  \end{displaymath}
  what proves~\eqref{e:d3}, by minimizing the numerator and maximizing
  the denominator to minimize the expression for $\delta_3'$ and
  $\delta_3''$, and vice versa, as in the proof of Lemma~\ref{l:R}.
\end{proof}

\looseness=-1
Therefore, \emph{it is possible to compute $\underline{R}$ with high
relative accuracy in the absence of underflows}, in the working
precision only, but it is easier to employ two multiplications, one
addition or subtraction, and one division in a wider, more precise
datatype.  Table~\ref{t:d} shows by how many $\varepsilon$s the lower
and the upper bounds for $\delta_1$, $\delta_2$, and $\delta_3$
from~\eqref{e:d1}, \eqref{e:d2}, and~\eqref{e:d3}, respectively,
differ from unity.  The quantities in the table's header were computed
symbolically as algebraic expressions by substituting $\varepsilon$
and $\varepsilon_{\text{\texttt{T}}}$ with their defining powers of
two, then approximated numerically with $p_{\text{\texttt{T}}}$
decimal digits, and rounded upwards to nine digits after the decimal
point, by a Wolfram Language script\footnote{The \texttt{relerr.wls}
file in the code supplement, executed by the Wolfram Engine 14.0.0 for
macOS (Intel).}.

\begin{table}[hbtp]
\caption{Lower and upper bounds for $\delta_1$, $\delta_2$, and $\delta_3$ in single and double precision.}\label{t:d}
{\addtolength{\tabcolsep}{-.5pt}
\begin{tabular}{@{}ccccccc@{}}
\toprule precision &
$(1-\delta_1^-)/\varepsilon$ & $(\delta_1^+-1)/\varepsilon$ &
$(1-\delta_2^-)/\varepsilon$ & $(\delta_2^+-1)/\varepsilon$ &
$(1-\delta_3^-)/\varepsilon$ & $(\delta_3^+-1)/\varepsilon$\\
\midrule
single & 3.499999665 & 3.500000336 & 4.499999457 & 4.500000544 & 3.499999667 & 3.500000338\\
double & 3.500000000 & 3.500000001 & 4.500000000 & 4.500000001 & 3.500000000 & 3.500000001\\
\botrule
\end{tabular}}
\end{table}
\subsection{The SVD of $R$ and $G$}\label{ss:3.3}
For $\underline{R}$ now holds $\mathfrak{t}''=13$.  If
$\underline{r_{11}}<\underline{r_{22}}$, the diagonal elements of
$\underline{R}$ have to be swapped, similarly to \texttt{xLASV2}.
This is done by symmetrically permuting $\underline{R}^T$ with
$\widetilde{P}=\left[\begin{smallmatrix}1&0\\0&1\end{smallmatrix}\right]$.
Multiplying
$\widetilde{R}=\widetilde{P}^T R^T \widetilde{P}=\widetilde{U}\widetilde{\Sigma}\widetilde{V}^T$
by $\widetilde{P}$ from the left and $\widetilde{P}^T$ from the right gives
\begin{displaymath}
  R^T=\widetilde{P}\widetilde{U}\widetilde{\Sigma}\widetilde{V}^T\widetilde{P}^T,\qquad
  R=\widetilde{P}\widetilde{V}\widetilde{\Sigma}\widetilde{U}^T\widetilde{P}^T=\check{U}\check{\Sigma}\check{V}^T,
\end{displaymath}
where $\check{U}=\widetilde{P}\widetilde{V}$,
$\check{\Sigma}=\widetilde{\Sigma}$, and
$\check{V}=\widetilde{P}\widetilde{U}$.  Therefore, having applied the
permutation $\widetilde{P}$,
\begin{equation}
  \begin{gathered}
    \widetilde{U}=\widetilde{U}_{\varphi}P_{\tilde{\bm{\sigma}}},\quad\widetilde{V}=\widetilde{V}_{\psi}P_{\tilde{\bm{\sigma}}},\qquad
    \check{U}=\check{U}_{\bm{\varphi}}P_{\tilde{\bm{\sigma}}},\quad\check{V}=\check{V}_{\bm{\psi}}P_{\tilde{\bm{\sigma}}},\\
    \begin{aligned}
      \check{U}_{\bm{\varphi}}&=\widetilde{P}\widetilde{V}_{\psi}=
      \begin{bmatrix}
      \sin\psi & \hphantom{-}\cos\psi\\
      \cos\psi & -\sin\psi
      \end{bmatrix}=
      \begin{bmatrix}
      \sin\psi & -\cos\psi\\
      \cos\psi & \hphantom{-}\sin\psi
      \end{bmatrix}S_2=
      \begin{bmatrix}
      \cos{\bm{\varphi}} & -\sin{\bm{\varphi}}\\
      \sin{\bm{\varphi}} & \hphantom{-}\cos{\bm{\varphi}}
      \end{bmatrix}S_2,\\
      \check{V}_{\bm{\psi}}&=\widetilde{P}\widetilde{U}_{\varphi}=
      \begin{bmatrix}
      \sin\varphi & \hphantom{-}\cos\varphi\\
      \cos\varphi & -\sin\varphi
      \end{bmatrix}=
      \begin{bmatrix}
      \sin\varphi & -\cos\varphi\\
      \cos\varphi & \hphantom{-}\sin\varphi
      \end{bmatrix}S_2=
      \begin{bmatrix}
      \cos{\bm{\psi}} & -\sin{\bm{\psi}}\\
      \sin{\bm{\psi}} & \hphantom{-}\cos{\bm{\psi}}
      \end{bmatrix}S_2,
    \end{aligned}\\
    S_2=\begin{bmatrix}
    1 & \hphantom{-}0\\
    0 & -1
    \end{bmatrix},\qquad
    \begin{gathered}
      \cos{\bm{\varphi}}=\sin\psi,\quad\sin{\bm{\varphi}}=\cos\psi,\quad\tan{\bm{\varphi}}=1/\tan\psi,\\
      \cos{\bm{\psi}}=\sin\varphi,\quad\sin{\bm{\psi}}=\cos\varphi,\quad\tan{\bm{\psi}}=1/\tan\varphi,
    \end{gathered}
  \end{gathered}
  \label{e:RT}
\end{equation}
where $\widetilde{U}_{\varphi}$, $\widetilde{V}_{\psi}$, and
$P_{\tilde{\bm{\sigma}}}$ come from the SVD of $\widetilde{R}$ in
Section~\ref{sss:3.3.1}.  If $\underline{r_{11}}\ge\underline{r_{22}}$
then $\underline{\widetilde{R}}=\underline{R}$,
$\check{U}=\widetilde{U}$, $\check{V}=\widetilde{V}$, \eqref{e:RT} is
not used, and let $S_2=I$, $\bm{\varphi}=\varphi$,
and $\bm{\psi}=\psi$.

Assume that no inexact underflow occurs in the computation of
$\underline{R}$.  If the initial matrix $G$ was triangular, let
$\delta_{11}=\delta_{12}=\delta_{22}=1$.  Else, let $\delta_{11}$,
$\delta_{12}$, and $\delta_{22}$ stand for the error factors of the
elements of $\underline{\widetilde{R}}$ that correspond to those from
Theorem~\ref{t:R}, i.e.,
\begin{equation}
  \widetilde{R}=\begin{bmatrix}
  \tilde{r}_{11} & \tilde{r}_{12}\\
  0 & \tilde{r}_{22}
  \end{bmatrix},\qquad
  \underline{\widetilde{R}}=\begin{bmatrix}
  \underline{\tilde{r}_{11}} & \underline{\tilde{r}_{12}}\\
  0 & \underline{\tilde{r}_{22}}
  \end{bmatrix}=\begin{bmatrix}
  \tilde{r}_{11}\delta_{11} & \tilde{r}_{12}\delta_{12}^{}\\
  0 & \tilde{r}_{22}\delta_{22}^{}
  \end{bmatrix},\quad
  \underline{\tilde{r}_{11}}\ge\underline{\tilde{r}_{22}}.
  \label{e:dR}
\end{equation}
It remains to compute the SVD of $\underline{\widetilde{R}}$ by an
alternative to \texttt{xLASV2}, what is described in
Section~\ref{sss:3.3.1}, and assemble the SVD of $G$, what is
explained in Section~\ref{sss:3.3.2}.
\subsubsection{The SVD of $\widetilde{R}$}\label{sss:3.3.1}
The key observation in this part is that the
traditional~\cite[Eq.~(4.12)]{Charlier-et-al-87} formula for
$\tan(2\varphi)/2$ involving the squares of the elements of
$\widetilde{R}$ can be simplified to an expression that does not
require any explicit squaring if the hypotenuse calculation is
considered a basic arithmetic operation.  The two following
expressions for $\tan(2\varphi)$ are equivalent,
\begin{equation}
  \tan(2\varphi)=\frac{2\tilde{r}_{12}^{}\tilde{r}_{22}^{}}{\tilde{r}_{11}^2+\tilde{r}_{12}^2-\tilde{r}_{22}^2}=\frac{2\tilde{r}_{12}^{}\tilde{r}_{22}^{}}{(h-\tilde{r}_{22}^{})(h+\tilde{r}_{22}^{})},\quad
  h=\sqrt{\tilde{r}_{11}^2+\tilde{r}_{12}^2}.
  \label{e:t2f}
\end{equation}

With
$\underline{h}=\hypot(\underline{\tilde{r}_{11}},\underline{\tilde{r}_{12}})$, let
$\mathbf{s}=\underline{h}\oplus\underline{\tilde{r}_{22}}$ and
$\mathbf{d}=\underline{h}\ominus\underline{\tilde{r}_{22}}$ be the sum
and the difference\footnote{With the prescaling as employed, $\ominus$ can be replaced
by subtraction $d=h-\tilde{r}_{22}$ and $\mathbf{d}=(e_d,f_d)$.} of
$\underline{h}$ and $\underline{\tilde{r}_{22}}$ as in~\eqref{e:plus}
and \eqref{e:minus}, respectively, for
$\underline{h}>\underline{\tilde{r}_{22}}$.  From~\eqref{e:dR} and
since $0<\tilde{r}_{ij}\le\nu/(2\sqrt{2})$ for $1\le i\le j\le 2$, it
holds
$0<\underline{\tilde{r}_{22}}\le\underline{\tilde{r}_{11}}\le\underline{h}\le\nu$.
Thus~\eqref{e:t2f} can be re-written using~\eqref{e:unary}
and~\eqref{e:binary}, with
$\tilde{\mathbf{r}}_{12}=(e_{\underline{\tilde{r}_{12}}},f_{\underline{\tilde{r}_{12}}})$
and
$\tilde{\mathbf{r}}_{22}=(e_{\underline{\tilde{r}_{22}}},f_{\underline{\tilde{r}_{22}}})$,
as
\begin{equation}
  \underline{h}=\underline{\tilde{r}_{22}}\implies
  \underline{\tan(2\varphi)}=\infty,\quad
  \underline{h}>\underline{\tilde{r}_{22}}\implies
  \underline{\tan(2\varphi)}=\fl((2\odot\tilde{\mathbf{r}}_{12}\odot\tilde{\mathbf{r}}_{22})\oslash(\mathbf{d}\odot\mathbf{s})),
  \label{e:t2f'}
\end{equation}
where the computation's precision is unchanged, but the exponent range
is widened.

In~\eqref{e:trad}, the denominator of the expression for
$\tan(2\phi)$,
$\mathsf{d}=g_{11}^2+g_{12}^2-g_{21}^2-g_{22}^2$,
can also be computed using $\hypot$, without explicitly squaring
any matrix element, as
\begin{displaymath}
    \mathsf{d}=\left(\sqrt{g_{11}^2+g_{12}^2}-\sqrt{g_{21}^2+g_{22}^2}\right)\left(\sqrt{g_{11}^2+g_{12}^2}+\sqrt{g_{21}^2+g_{22}^2}\right).
\end{displaymath}

Only if $\mathfrak{t}'\cong 13$ can happen that
$\fl(\underline{\tilde{r}_{11}}/\underline{\tilde{r}_{12}})<\varepsilon$.
In the first denominator in~\eqref{e:t2f},
$\tilde{r}_{11}^2$ and $\tilde{r}_{22}^2$ then have a negligible
effect on $\tilde{r}_{12}^2$, so the expression for
$\underline{\tan(2\varphi)}$ can be simplified, as in \texttt{xLASV2},
to the same formula which would the case
$\underline{\tilde{r}_{11}}=\underline{\tilde{r}_{22}}$ imply,
\begin{equation}
  \underline{\tan(2\varphi)}=\fl((2\underline{\tilde{r}_{22}})/\underline{\tilde{r}_{12}}).
  \label{e:t2f''}
\end{equation}

Let
$\tilde{\mathbf{r}}_{11}=(e_{\underline{\tilde{r}_{11}}},f_{\underline{\tilde{r}_{11}}})$.
If $\underline{\tilde{r}_{12}}=\underline{\tilde{r}_{22}}$,
\eqref{e:t2f} can be simplified by explicit squaring to
\begin{equation}
  \underline{\tan(2\varphi)}=\fl((2\odot\tilde{\mathbf{r}}_{12}\odot\tilde{\mathbf{r}}_{22})\oslash(\tilde{\mathbf{r}}_{11}\odot\tilde{\mathbf{r}}_{11})).
  \label{e:t2f'''}
\end{equation}
Both~\eqref{e:t2f''} and~\eqref{e:t2f'''} admit a simple roundoff
analysis.  However, \eqref{e:t2f'} does not, due to a subtraction of
potentially inexact values of a similar magnitude when computing
$\mathbf{d}$.  Section~\ref{s:4} shows, with a high probability by an
exhaustive testing, that~\eqref{e:t2f'} does not cause excessive
relative errors in the singular values for $\mathfrak{t}'\cong 13$,
and neither for $\mathfrak{t}'=15$ if the range of the exponents of
the elements of input matrices is limited in width to about
$(e_{\nu}-e_{\mu})/2$.  If $\hypot$ is not correctly rounded, the
procedure from~\cite{Novakovic-20,Novakovic-Singer-22} for computing
$\underline{\tan(2\varphi)}$ without squaring the input values can be
adopted instead of~\eqref{e:t2f'}, as shown in Algorithm~\ref{a:tf},
but still without theoretical relative error bounds.

\begin{algorithm}[hbtp]
  \caption{Computation of the functions of $\varphi$ from $\underline{\widetilde{R}}$.}\label{a:tf}
  \begin{algorithmic}[1]
    \IF{$\underline{\tilde{r}_{11}}=\underline{\tilde{r}_{22}}$}
    \STATE{$\underline{\tan(2\varphi)}=\fl((2\underline{\tilde{r}_{22}})/\underline{\tilde{r}_{12}})$}
    \COMMENT{\eqref{e:t2f''}}
    \ELSIF{$\underline{\tilde{r}_{12}}=\underline{\tilde{r}_{22}}$}
    \STATE{$\underline{\tan(2\varphi)}=\fl((2\odot\tilde{\mathbf{r}}_{12}\odot\tilde{\mathbf{r}}_{22})\oslash(\tilde{\mathbf{r}}_{11}\odot\tilde{\mathbf{r}}_{11}))$}
    \COMMENT{\eqref{e:t2f'''}}
    \ELSIF[only if $\mathfrak{t}'\cong 13$]{$\fl(\underline{\tilde{r}_{11}}/\underline{\tilde{r}_{12}})<\varepsilon$}
    \STATE{$\underline{\tan(2\varphi)}=\fl((2\underline{\tilde{r}_{22}})/\underline{\tilde{r}_{12}})$}
    \COMMENT{\eqref{e:t2f''}}
    \ELSIF[\cite{Novakovic-20,Novakovic-Singer-22}]{$\hypot$ is not correctly rounded}
    \IF{$\underline{\tilde{r}_{11}}>\underline{\tilde{r}_{12}}$}
    \STATE{$\underline{x}=\fl(\underline{\tilde{r}_{12}}/\underline{\tilde{r}_{11}});\quad\underline{y}=\fl(\underline{\tilde{r}_{22}}/\underline{\tilde{r}_{11}})$}
    \STATE{$\underline{\tan(2\varphi)}=\fl(\fl((2\underline{x})\underline{y})/\max(\fma(\fl(\underline{x}-\underline{y}),\fl(\underline{x}+\underline{y}),1),0))$}
    \ELSE[$\underline{\tilde{r}_{11}}\le\underline{\tilde{r}_{12}}$]
    \STATE{$\underline{x}=\fl(\underline{\tilde{r}_{11}}/\underline{\tilde{r}_{12}});\quad\underline{y}=\fl(\underline{\tilde{r}_{22}}/\underline{\tilde{r}_{12}})$}
    \STATE{$\underline{\tan(2\varphi)}=\fl((2\underline{y})/\max(\fma(\fl(\underline{x}-\underline{y}),\fl(\underline{x}+\underline{y}),1),0))$}
    \ENDIF
    \ELSE[the general case]
    \STATE{$\underline{h}=\hypot(\underline{\tilde{r}_{11}},\underline{\tilde{r}_{12}})$}
    \IF{$\underline{h}=\underline{\tilde{r}_{22}}$}
    \STATE{$\underline{\tan(2\varphi)}=\infty$}
    \COMMENT{\eqref{e:t2f'}}
    \ELSE[$\underline{h}>\underline{\tilde{r}_{22}}$]
    \STATE{$\mathbf{s}=\underline{h}\oplus\underline{\tilde{r}_{22}};\quad\mathbf{d}=\underline{h}\ominus\underline{\tilde{r}_{22}};\quad\underline{\tan(2\varphi)}=\fl((2\odot\tilde{\mathbf{r}}_{12}\odot\tilde{\mathbf{r}}_{22})\oslash(\mathbf{d}\odot\mathbf{s}))$}
    \COMMENT{\eqref{e:t2f'}}
    \ENDIF
    \ENDIF
    \IF{$\underline{\tan(2\varphi)}=\infty$}
    \STATE{$\underline{\tan\varphi}=1$}
    \ELSE[the general case]
    \STATE{$\underline{\tan\varphi}=\fl(\underline{\tan(2\varphi)}/\fl(1+\hypot(\underline{\tan(2\varphi)},1)))$}
    \COMMENT{\eqref{e:tf}}
    \ENDIF
    \STATE{$\underline{\sec\varphi}=\hypot(\underline{\tan\varphi},1);\quad\underline{\cos\varphi}=\fl(1/\underline{\sec\varphi});\quad\underline{\sin\varphi}=\fl(\underline{\tan\varphi}/\underline{\sec\varphi})$}
  \end{algorithmic}
\end{algorithm}

All cases of Algorithm~\ref{a:tf} lead to
$0\le\underline{\tan\varphi}\le 1$.  From $\underline{\tan\varphi}$
follows $\underline{\sec\varphi}$, as well as
$\underline{\cos\varphi}$ and $\underline{\sin\varphi}$, when
explicitly required, what completely determines
$\underline{\widetilde{U}_{\varphi}}$.

To determine $\underline{\widetilde{V}_{\psi}}$, $\tan\psi$ is
obtained from $\tan\varphi$ (see~\cite[Eq.~(4.10)]{Charlier-et-al-87})
as
\begin{equation}
  \tan\psi=(\tilde{r}_{12}+\tilde{r}_{22}\tan\varphi)/\tilde{r}_{11},\qquad
  \underline{\tan\psi}=\fl(\underline{t}/\underline{\tilde{r}_{11}}),\quad
  \underline{t}=\fma(\underline{\tilde{r}_{22}},\underline{\tan\varphi},\underline{\tilde{r}_{12}}).
  \label{e:tp}
\end{equation}
Let
$\mathop{\mathbf{sec}}\varphi=(e_{\underline{\sec\varphi}},f_{\underline{\sec\varphi}})$.
If $\underline{\tan\psi}$ is finite (e.g., when $\mathfrak{t}'=15$,
due to the pivoting~\cite[Theorem~1]{Novakovic-20}), so is
$\underline{\sec\psi}$.  Then, let
$\mathop{\mathbf{sec}}\psi=(e_{\underline{\sec\psi}},f_{\underline{\sec\psi}})$.
By fixing the evaluation order for reproducibility, the singular
values $\tilde{\bm{\sigma}}_1''$ and $\tilde{\bm{\sigma}}_2''$ of
$\underline{\widetilde{R}}$ are
computed~\cite{Hari-Matejas-09,Novakovic-Singer-22} as
\begin{equation}
  \mathbf{s}_{\psi}^{\varphi}=\mathop{\mathbf{sec}}\varphi\oslash\mathop{\mathbf{sec}}\psi,\quad
  \tilde{\bm{\sigma}}_2''=\tilde{\mathbf{r}}_{22}^{}\odot\mathbf{s}_{\psi}^{\varphi},\quad
  \tilde{\bm{\sigma}}_1''=\tilde{\mathbf{r}}_{11}^{}\oslash\mathbf{s}_{\psi}^{\varphi}.
  \label{e:s'}
\end{equation}

If $\underline{\tan\psi}$ overflows due to a small
$\underline{\tilde{r}_{11}}$ (the prescaling ensures that
$\underline{t}$ is always finite), let
$\mathbf{t}=(e_{\underline{t}},f_{\underline{t}})$.  In this case,
similarly to the one in \texttt{xLASV2} for
$\fl(\underline{\tilde{r}_{11}}/\underline{\tilde{r}_{12}})<\varepsilon$,
it holds $\sec\psi\gtrapprox\tan\psi$, so
$\cos\psi\lessapprox 1/\tan\psi$.  To confine subnormal values to
outputs only, let
\begin{equation}
  \mathop{\mathbf{cos}}\psi=\tilde{\mathbf{r}}_{11}\oslash\mathbf{t},\quad
  \underline{\cos\psi}=\fl(\mathop{\mathbf{cos}}\psi),\quad
  \underline{\sin\psi}=1.
  \label{e:tp'}
\end{equation}
By substituting
$1/\mathop{\mathbf{cos}}\psi\approx\mathbf{t}\oslash\tilde{\mathbf{r}}_{11}$
from~\eqref{e:tp'} for $\mathop{\mathbf{sec}}\psi$ in~\eqref{e:s'},
simplifying the results, and fixing the evaluation order, the singular
values of $\underline{\widetilde{R}}$ in this case are obtained as
\begin{equation}
  \tilde{\bm{\sigma}}_1''=\mathbf{t}\oslash\mathop{\mathbf{sec}}\varphi,\quad
  \tilde{\bm{\sigma}}_2''=\tilde{\mathbf{r}}_{22}^{}\odot(\mathop{\mathbf{sec}}\varphi\odot\mathop{\mathbf{cos}}\psi).
  \label{e:s''}
\end{equation}

From~\eqref{e:tp}, $\tan\psi>\nu$ implies
$\tan\varphi\lessapprox\tan(2\varphi)/2\lessapprox\tilde{r}_{22}/\tilde{r}_{12}\le\tilde{r}_{11}/\tilde{r}_{12}<1/(\nu-1)$,
so $\sec\varphi\gtrapprox 1$.  Therefore,
$\mathop{\mathbf{sec}}\varphi$ may be eliminated from~\eqref{e:s''},
similarly as in \texttt{xLASV2}.

The SVD of $\underline{\widetilde{R}}$ has thus been computed (without
explicitly forming $\underline{\widetilde{U}_{\varphi}}$ and
$\underline{\widetilde{V}_{\psi}}$) as
\begin{equation}
  \begin{aligned}
    \underline{\widetilde{R}}&\approx
    \begin{bmatrix}
      \underline{\cos\varphi} & -\underline{\sin\varphi}\\
      \underline{\sin\varphi} & \hphantom{-}\underline{\cos\varphi}
    \end{bmatrix}
    P_{\tilde{\bm{\sigma}}}^{}P_{\tilde{\bm{\sigma}}}^T
    \begin{bmatrix}
      \tilde{\bm{\sigma}}_1'' & 0\\
      0 & \tilde{\bm{\sigma}}_2''
    \end{bmatrix}
    P_{\tilde{\bm{\sigma}}}^{}P_{\tilde{\bm{\sigma}}}^T
    \begin{bmatrix}
      \hphantom{-}\underline{\cos\psi} & \underline{\sin\psi}\\
      -\underline{\sin\psi} & \underline{\cos\psi}
    \end{bmatrix}\\
    &=(\underline{\widetilde{U}_{\varphi}^{}}P_{\tilde{\bm{\sigma}}}^{})(P_{\tilde{\bm{\sigma}}}^T\underline{\widetilde{\Sigma}_{\tilde{\bm{\sigma}}}''}P_{\tilde{\bm{\sigma}}}^{})(P_{\tilde{\bm{\sigma}}}^T\underline{\widetilde{V}_{\psi}^T})
    =\underline{\widetilde{U}}\underline{\widetilde{\Sigma}_{\tilde{\bm{\sigma}}}'}\underline{\widetilde{V}^T}.
  \end{aligned}
  \label{e:RSVD}
\end{equation}
If $\tilde{\bm{\sigma}}_1''\prec\tilde{\bm{\sigma}}_2''$, then
$\tilde{\bm{\sigma}}_1'=\tilde{\bm{\sigma}}_2''$,
$\tilde{\bm{\sigma}}_2'=\tilde{\bm{\sigma}}_1''$, and
$P_{\tilde{\bm{\sigma}}}^{}=\left[\begin{smallmatrix}0&1\\1&0\end{smallmatrix}\right]$,
else $\tilde{\bm{\sigma}}_i'=\tilde{\bm{\sigma}}_i''$ and
$P_{\tilde{\bm{\sigma}}}^{}=I$, as presented in Algorithm~\ref{a:tp}.
If $\underline{r_{11}}<\underline{r_{22}}$ then
$\underline{\check{V}}$ should be formed as in~\eqref{e:RT}, and
$\underline{\check{U}}$ as well if $\mathfrak{t}'\cong 13$.  Else, if
$\underline{r_{11}}\ge\underline{r_{22}}$, then
$\underline{\check{V}}=\underline{\widetilde{V}}$, and (only
implicitly for $\mathfrak{t}'=15$)
$\underline{\check{U}}=\underline{\widetilde{U}}$.

\begin{algorithm}[hbtp]
  \caption{Computation of the functions of $\psi$ and the singular values of $\underline{\widetilde{R}}$.}\label{a:tp}
  \begin{algorithmic}[1]
    \STATE{$\underline{t}=\fma(\underline{\tilde{r}_{22}},\underline{\tan\varphi},\underline{\tilde{r}_{12}});\quad\underline{\tan\psi}=\fl(\underline{t}/\underline{\tilde{r}_{11}})$}
    \COMMENT{\eqref{e:tp}}
    \IF[only if $\mathfrak{t}'\cong 13$]{$\underline{\tan\psi}=\infty$}
    \STATE{$\mathbf{t}=(e_{\underline{t}},f_{\underline{t}});\quad\mathop{\mathbf{cos}}\psi=\tilde{\mathbf{r}}_{11}\oslash\mathbf{t};\quad\underline{\cos\psi}=\fl(\mathop{\mathbf{cos}}\psi);\quad\underline{\sin\psi}=1$}
    \COMMENT{\eqref{e:tp'}}
    \STATE{$\tilde{\bm{\sigma}}_1''=\mathbf{t};\quad\tilde{\bm{\sigma}}_2''=\tilde{\mathbf{r}}_{22}^{}\odot\mathop{\mathbf{cos}}\psi$}
    \COMMENT{\eqref{e:s''}}
    \ELSE[the general case]
    \STATE{$\underline{\sec\psi}=\hypot(\underline{\tan\psi},1);\quad\underline{\cos\psi}=\fl(1/\underline{\sec\psi});\quad\underline{\sin\psi}=\fl(\underline{\tan\psi}/\underline{\sec\psi})$}
    \STATE{$\mathop{\mathbf{sec}}\varphi=(e_{\underline{\sec\varphi}},f_{\underline{\sec\varphi}});\quad\mathop{\mathbf{sec}}\psi=(e_{\underline{\sec\psi}},f_{\underline{\sec\psi}});\quad\mathbf{s}_{\psi}^{\varphi}=\mathop{\mathbf{sec}}\varphi\oslash\mathop{\mathbf{sec}}\psi$}
    \STATE{$\tilde{\bm{\sigma}}_1''=\tilde{\mathbf{r}}_{11}^{}\oslash\mathbf{s}_{\psi}^{\varphi};\quad\tilde{\bm{\sigma}}_2''=\tilde{\mathbf{r}}_{22}^{}\odot\mathbf{s}_{\psi}^{\varphi}$}
    \COMMENT{\eqref{e:s'}}
    \ENDIF
    \IF[\eqref{e:prec}]{$\tilde{\bm{\sigma}}_1''\prec\tilde{\bm{\sigma}}_2''$}
    \STATE{$\tilde{\bm{\sigma}}_1'=\tilde{\bm{\sigma}}_2'';\quad\tilde{\bm{\sigma}}_2'=\tilde{\bm{\sigma}}_1'';\quad P_{\tilde{\bm{\sigma}}}^{}=\left[\begin{smallmatrix}0&1\\1&0\end{smallmatrix}\right]$}
    \ELSE[the general case]
    \STATE{$\tilde{\bm{\sigma}}_1'=\tilde{\bm{\sigma}}_1'';\quad\tilde{\bm{\sigma}}_2'=\tilde{\bm{\sigma}}_2'';\quad P_{\tilde{\bm{\sigma}}}^{}=\left[\begin{smallmatrix}1&0\\0&1\end{smallmatrix}\right]$}
    \ENDIF
  \end{algorithmic}
\end{algorithm}
\subsubsection{The SVD of $G$}\label{sss:3.3.2}
The approximate backscaled singular values of $G$ are
$\bm{\sigma}_i^{}=2^{-s}\odot\tilde{\bm{\sigma}}_i'$.  They should
remain in the exponent-``mantissa'' form if possible, to avoid
overflows and underflows.

\looseness=-1
Recall that, for $\mathfrak{t}'=15$,
$\underline{\widetilde{U}_{\varphi}^T}$ and the QR rotation
$\underline{U_{\vartheta}^T}$ have not been explicitly formed.  The
reason is that
$\underline{\widehat{U}^T}=\underline{\check{U}_{\bm{\varphi}}^T}\underline{U_+^T}$,
where $\underline{U_+^T}$ is constructed from
$\underline{U_{\vartheta}^T}$ as in~\eqref{e:UV15}, requires a
matrix-matrix multiplication that can and sporadically will degrade
the numerical orthogonality of $\underline{\widehat{U}}$.  On its own,
such a problem is expected and can be tolerated, but if the left
singular vectors of a pivot submatrix are applied to a pair of pivot
rows of a large iteration matrix, many times throughout the
Kogbetliantz process~\eqref{e:SVD}, it is imperative to make the vectors as
orthogonal as possible, and thus try not to destroy the singular
values of the iteration matrix.  In the following,
$\underline{\widehat{U}}$ is generated from a \emph{single}
$\tan{\bm{\phi}}$, where $\tan{\bm{\phi}}$ is a function of the
already computed $\tan{\bm{\varphi}}$ and $\tan{\vartheta}$.

If $\mathfrak{t}'\cong 13$, let
$\underline{\widehat{U}}=U_+^{}\underline{\check{U}}$, where
$U_+^{}$ comes from Section~\ref{sss:3.2.1}.  Else, due
to~\eqref{e:RT}, if $S_{22}^T=I$ in~\eqref{e:UV15}, the product
$\check{U}_{\bm{\varphi}}^TU_{\vartheta}^T=U_{\bm{\varphi}+\vartheta}^T$
can be written in the terms of $\bm{\varphi}+\vartheta$ as
\begin{equation}
  U_{\bm{\varphi}+\vartheta}^T=S_2^T
  \begin{bmatrix}
    \hphantom{-}\cos{\bm{\varphi}} & \sin{\bm{\varphi}}\\
    -\sin{\bm{\varphi}} & \cos{\bm{\varphi}}
  \end{bmatrix}
  \begin{bmatrix}
    \hphantom{-}\cos\vartheta & \sin\vartheta\\
    -\sin\vartheta & \cos\vartheta
  \end{bmatrix}=S_2^T
  \begin{bmatrix}
    \hphantom{-}\cos(\bm{\varphi}+\vartheta) & \sin(\bm{\varphi}+\vartheta)\\
    -\sin(\bm{\varphi}+\vartheta) & \cos(\bm{\varphi}+\vartheta)
  \end{bmatrix}.
  \label{e:p+t}
\end{equation}
If
$S_{22}^T=\left[\begin{smallmatrix}1&\hphantom{-}0\\0&-1\end{smallmatrix}\right]$,
$U_{\bm{\varphi}-\vartheta}^T=\check{U}_{\bm{\varphi}}^TS_{22}^TU_{\vartheta}^T$
can be written in the terms of $\bm{\varphi}-\vartheta$ as
\begin{equation}
  U_{\bm{\varphi}-\vartheta}^T=S_2^T
  \begin{bmatrix}
    \hphantom{-}\cos(\bm{\varphi}-\vartheta) & -\sin(\bm{\varphi}-\vartheta)\\
    -\sin(\bm{\varphi}-\vartheta) & -\cos(\bm{\varphi}-\vartheta)
  \end{bmatrix}=S_2^T
  \begin{bmatrix}
    \hphantom{-}\cos(\bm{\varphi}-\vartheta) & \sin(\bm{\varphi}-\vartheta)\\
    -\sin(\bm{\varphi}-\vartheta) & \cos(\bm{\varphi}-\vartheta)
  \end{bmatrix}S_2^{},
  \label{e:p-t}
\end{equation}
what is obtained by multiplying the matrices
$U_{\varphi}^T\left[\begin{smallmatrix}1&\hphantom{-}0\\0&-1\end{smallmatrix}\right]U_{\vartheta}^T$
and simplifying the result using the trigonometric identities for the
(co)sine of the difference of the angles $\bm{\varphi}$ and
$\vartheta$.  The middle matrix factor represents a possible sign
change of $r_{22}'$ as in Section~\ref{sss:3.2.2}.  The matrices
defined in~\eqref{e:p+t} and~\eqref{e:p-t} are determined by
$\tan(\bm{\varphi}+\vartheta)$ and $\tan(\bm{\varphi}-\vartheta)$,
respectively, where these tangents follow from the already computed
ones as
\begin{equation}
  \tan(\bm{\varphi}+\vartheta)=\frac{\tan\bm{\varphi}+\tan\vartheta}{1-\tan{\bm{\varphi}}\tan\vartheta},\qquad
  \tan(\bm{\varphi}-\vartheta)=\frac{\tan\bm{\varphi}-\tan\vartheta}{1+\tan{\bm{\varphi}}\tan\vartheta}.
  \label{e:tan+-}
\end{equation}
Finally, from~\eqref{e:UV15} and~\eqref{e:RT}, using
either~\eqref{e:p+t} or~\eqref{e:p-t}, the SVD of $G$ is completed as
\begin{equation}
  \underline{\widehat{U}^T}=\underline{U_{\bm{\varphi}\pm\vartheta}^T}P_U^TS_1^T,\quad
  \underline{U}=\underline{\widehat{U}}P_{\tilde{\bm{\sigma}}},\qquad
  \underline{V}=\underline{\check{V}}P_{\tilde{\bm{\sigma}}}.
  \label{e:UV}
\end{equation}

For~\eqref{e:UV}, $P_U^TS_1^T$ from~\eqref{e:UV15} is explicitly built
and stored.  It contains exactly one $\pm 1$ element in each row and
column, while the other is zero.  Its multiplication by
$\underline{U_{\bm{\varphi}\pm\vartheta}^T}$ is thus performed
error-free.  The tangents computed as in~\eqref{e:tan+-} might be
relatively inaccurate in theory, but the transformations they define
via the cosines and the sines from either~\eqref{e:p+t}
or~\eqref{e:p-t} are numerically orthogonal in practice, as shown in
Section~\ref{s:4}.

This heuristic might become irrelevant if the $ab+cd$ floating-point
operation with a single rounding~\cite{Lauter-17} becomes supported in
hardware.  Then, each element of
$\underline{\check{U}_{\bm{\varphi}}^T}\underline{U_{\vartheta}^T}$ (a
product of two $2\times 2$ matrices) can be formed with one such
operation.  It remains to be seen if the multiplication approach
improves accuracy of the computed left singular vectors without
spoiling their orthogonality, compared to the proposed heuristic.

From the method's construction, it follows that if the side (left or
right) on which the signs are extracted while preparing $R$ is fixed
(see Section~\ref{ss:3.1}) and whenever the assumptions on the
arithmetic hold, the SVD of $G$ as proposed here is \emph{bitwise
reproducible} for any $G$ with finite elements.  Also, the method does
not produce any infinite or undefined element in its outputs $U$, $V$,
and (conditionally, as described) $\Sigma$.
\subsection{A complex input matrix}\label{ss:3.4}
If $G$ is purely imaginary, $\pm\mathrm{i}G$ is real.  Else, if $G$
has at least one complex element, the proposed real method is altered,
as detailed in~\cite{Novakovic-20,Novakovic-Singer-22}, in the
following ways:
\begin{enumerate}
  \item To make the element
    $0\ne g_{ij}=|g_{ij}|\mathrm{e}^{\mathrm{i}\alpha_{ij}}$ real and
    positive, its row or column is multiplied by
    $\mathrm{e}^{-\mathrm{i}\alpha_{ij}}$ (which goes into a sign
    matrix), and the element is replaced by its absolute value.  To
    avoid overflow, let $\mathfrak{s}_{\mathbb{C}}=\mathfrak{s}+1$
    in~\eqref{e:ts}.  The exponents of each component (real and
    imaginary) of every element are considered in~\eqref{e:eG}.
  \item $\underline{U_+^T}$ is explicitly constructed
    in~\eqref{e:UV15}, and
    $\underline{\check{U}_{\bm{\varphi}}^T}\underline{U_+^T}$ is
    formed by a real-complex matrix multiplication.  The correctly
    rounded $ab+cd$ operation~\cite{Lauter-17} would be helpful here.
    Merging
    $\underline{\check{U}_{\bm{\varphi}}^T}\underline{U_{\vartheta}^T}$
    as in~\eqref{e:p+t} or~\eqref{e:p-t} remains a possibility if
    $S_{22}^{}$ happens to be real.
  \item Since~\eqref{e:EXT} is no longer directly applicable for
    ensuring stability, no computation is performed in a wider
    datatype.  Reproducibility of the whole method is conditional upon
    reproducibility of the complex multiplication and the absolute
    value ($\hypot$).
\end{enumerate}
Once $\underline{R}$ is obtained, the algorithms from
Section~\ref{ss:3.3} work unchanged.
\section{Numerical testing}\label{s:4}
Numerical testing was performed on machines with a 64-core Intel Xeon
Phi 7210 CPU, a 64-bit Linux, and the Intel oneAPI Base and HPC
toolkits, version 2024.1.

Let the LAPACK's \texttt{xLASV2} routine be denoted by $\mathtt{L}$.
The Kogbetliantz SVD in the same datatype is denoted by $\mathtt{K}$.
Unless such information is clear from the context, let the results'
designators carry a subscript $\mathtt{K}$ or $\mathtt{L}$ in the
following figures, depending on the routine that computed them, and
also a superscript $\circ$ or $\bullet$, signifying how the input
matrices were generated.  All inputs were random.  Those denoted by
$\circ$ had their elements generated as Fortran's pseudorandom numbers
not above unity in magnitude, and those symbolized by $\bullet$ had
their elements' magnitudes in the ``safe'' range $[\mu,\nu/4]$, as
defined by~\eqref{e:safe}, to avoid overflows with $\mathtt{L}$ and
underflows due to the prescaling in $\mathtt{K}$.  The latter random
numbers were provided by the CPU's \texttt{rdrand} instructions.  If
not kept, the $\bullet$ inputs are thus not reproducible, unlike the
$\circ$ ones if the seed is preserved.

All relative error measures were computed in quadruple precision from
data in the working (single or double) precision.  The unknown exact
singular values of the input matrices were approximated by the
Kogbetliantz SVD method adapted to quadruple precision (with a
$\hypot$ operation that might not have been correctly rounded).

With $G$ given and $\underline{U}$, $\underline{\Sigma}$,
$\underline{V}$ computed, let the relative SVD residual be defined as
\begin{equation}
  \mathop{\mathrm{re}}G=\|G-\underline{U}\underline{\Sigma}\underline{V^T}\|_F/\|G\|_F,
  \label{e:svdres}
\end{equation}
the maximal relative error in the computed singular values
$\underline{\sigma_i}$ (with $\sigma_i$ being exact) as
\begin{equation}
  \mathop{\mathrm{re}}\sigma_i=|\sigma_i-\underline{\sigma_i}|/\sigma_i,\quad
  1\le i\le 2,\qquad
  \sigma_i=0\wedge\underline{\sigma_i}=0\implies\mathop{\mathrm{re}}\sigma_i=0,
  \label{e:relerr}
\end{equation}
and the departure from orthogonality in the Frobenius norm for
matrices of the left and right singular vectors (what can be seen as
the relative error with respect to $I$) as
\begin{equation}
  \mathop{\mathrm{re}}U=\|\underline{U}^T\underline{U}-I\|_F,\qquad
  \mathop{\mathrm{re}}V=\|\underline{V}^T\underline{V}-I\|_F.
  \label{e:ortho}
\end{equation}

Every datapoint in the figures shows the maximum of a particular
relative error measure over a batch of input matrices, were each batch
(run) contained $2^{30}$ matrices.

Figure~\ref{f:1} covers the case of upper triangular input matrices,
which can be processed by both $\mathtt{K}$ and $\mathtt{L}$, and the
measures~\eqref{e:svdres} and~\eqref{e:ortho}.  Numerical
orthogonality of the singular vectors computed by $\mathtt{K}$ is
noticeably better than of those obtained by $\mathtt{L}$, in the worst
case.  Also, the relative SVD residuals are slightly better, in the
$\bullet$ and the $\circ$ runs.

\begin{figure}[hbtp]
  \centering
  \includegraphics{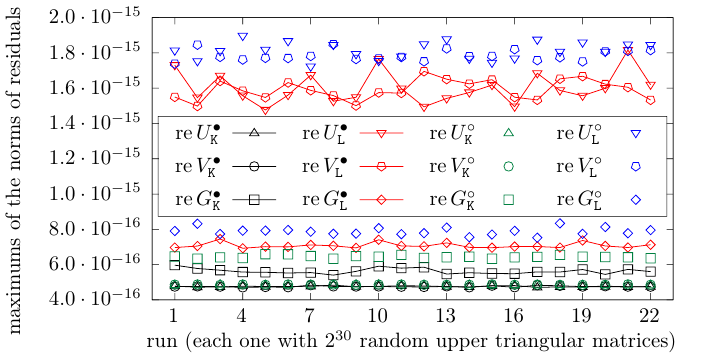}
  \caption{Numerical orthogonality of the singular vectors and the
    relative SVD residuals with $\mathtt{K}$ and $\mathtt{L}$ on
    random upper triangular double precision matrices.}\label{f:1}
\end{figure}

Figure~\ref{f:2} shows the relative errors in the singular
values~\eqref{e:relerr} of the same matrices from Figure~\ref{f:1}.
The unity mark for
$\mathop{\mathrm{re}_{\mathtt{L}}^{}}\sigma_2^{\bullet}$ indicates
that $\mathtt{L}$ can cause the relative errors in the smaller
singular values, $\underline{\sigma_2}$, to be so high in the
$\bullet$ case that their maximum was unity in all runs and cannot be
displayed in Figure~\ref{f:2}, most likely due to underflow to zero of
$\underline{\sigma_2^{\bullet}}$ when the ``exact''
$\sigma_2^{\bullet}>0$ in~\eqref{e:relerr}.  However, when
$\mathtt{L}$ managed to compute the smaller singular values accurately
in the $\circ$ case, the maximum of their relative errors was a bit
smaller than the one from $\mathtt{K}$, the cause of which is worth
exploring.  The same holds for the larger singular values, which were
computed accurately by both $\mathtt{L}$ and $\mathtt{K}$.

\begin{figure}[hbtp]
  \centering
  \includegraphics{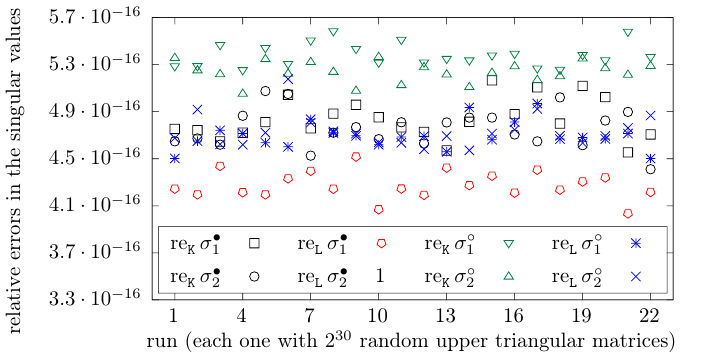}
  \caption{The relative errors in the singular values with
    $\mathtt{K}$ and $\mathtt{L}$ on random upper triangular double
    precision matrices, with
    $\max\kappa_2^{\bullet}\lessapprox 9.45\cdot 10^{1229}$ and
    $\max\kappa_2^{\circ}\lessapprox 7.32\cdot 10^{10}$.}\label{f:2}
\end{figure}

To put $\max\kappa_2^{\bullet}\lessapprox 9.45\cdot 10^{1229}$, for
which $\mathtt{K}$ still accurately computed all singular values (in
the exponent-``mantissa'' form, and thus not underflowing), into
perspective, the highest possible condition number for triangular
matrices in the $\bullet$ case can be estimated by recalling that
Algorithms~\ref{a:tf} and~\ref{a:tp} were also performed in quadruple
precision (to get $\sigma_1$, $\sigma_2$, and so $\kappa_2$), where
$\mu$ and $\nu$ of double precision, as well as $\nu/\mu$, are within
the normal range.  Then, $\tan\varphi$ can be made small and
$\tan\psi$ huge by, e.g.,
\begin{displaymath}
  G=\begin{bmatrix}
  \mu & \nu/4\\
  0 & \mu
  \end{bmatrix}\implies
  \tan(2\varphi)=\frac{8\mu}{\nu}\implies
  \frac{2\mu}{\nu}<\tan\varphi\lessapprox\frac{4\mu}{\nu}\implies
  \tan\psi\gtrapprox\frac{\nu}{4\mu}.
\end{displaymath}
Therefore, the condition number of $G$ is a cubic expression in
$\nu/\mu$, since, from~\eqref{e:s'},
\begin{displaymath}
  \sigma_2=\mu\frac{\sec\varphi}{\sec\psi}\approx\frac{4\mu^2}{\nu},\quad
  \sigma_1=\frac{\nu}{4}\frac{\sec\psi}{\sec\varphi}\approx\frac{\nu^2}{16\mu},\qquad
  \kappa_2=\frac{\sigma_1}{\sigma_2}\approx\frac{\nu^3}{64\mu^3}.
\end{displaymath}

Figure~\ref{f:3} focuses on $\mathtt{K}$ and general input matrices,
with all their elements random.  Inaccuracy of the smaller singular
values in the $\bullet$ case motivated the search for safe exponent
ranges of the elements of input matrices that should preserve accuracy
of $\underline{\sigma_2}$ from $\mathtt{L}$ for $\mathfrak{t}=13$ and
from $\mathtt{K}$ for $\mathfrak{t}=15$.  For that, the range of
random values was restricted, and only those outputs $x$ from
\texttt{rdrand} for which $|x|\in[2^{\varsigma}\mu,\nu/4]$ were
accepted, where $\varsigma$ was a positive integer parameter
independently chosen for each run.

\begin{figure}[hbtp]
  \centering
  \includegraphics{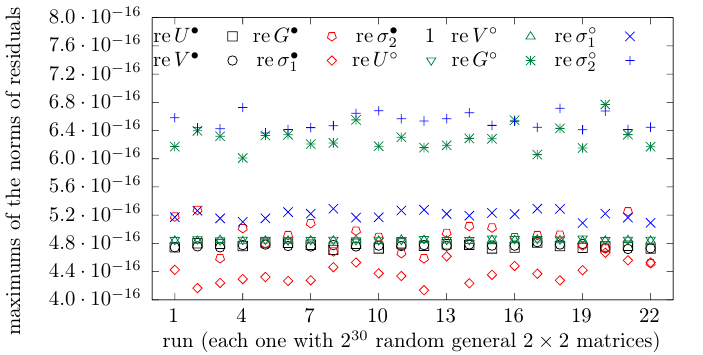}
  \caption{Numerical orthogonality of the singular vectors, the
    relative SVD residuals, and the relative errors in the singular
    values with $\mathtt{K}$ on random double precision matrices, with
    $\max\kappa_2^{\bullet}\lessapprox 1.42\cdot 10^{616}$ and
    $\max\kappa_2^{\circ}\lessapprox 2.84\cdot 10^{10}$.}\label{f:3}
\end{figure}

Figure~\ref{f:4} shows the results of this search for $\mathtt{K}$ and
$\mathtt{L}$.  Approximately half-way through the entire normal
exponent range the relative errors in the smaller singular values
stabilize to a single-digit multiple of $\varepsilon$.  Thus, when for
the exponents in $G$ holds
\begin{displaymath}
  \max_{1\le i,j\le 2}{e_{g_{ij}}}-\min_{1\le i,j\le 2}{e_{g_{ij}}}<(e_{\nu}-e_{\mu})/2
\end{displaymath}
(ignoring the exponent of $0$) it might be expected that $\mathtt{K}$
computes $\underline{\sigma_2}$ accurately, while $\mathtt{L}$ should
additionally be safeguarded by its user from the elements too close to
$\mu$.  The proposed prescaling, but with
$\mathfrak{s}_{\mathtt{L}}=\mathfrak{s}+1$ (or more), might be applied
to $G$ before $\mathtt{L}$.

\begin{figure}[hbtp]
  \centering
  \includegraphics{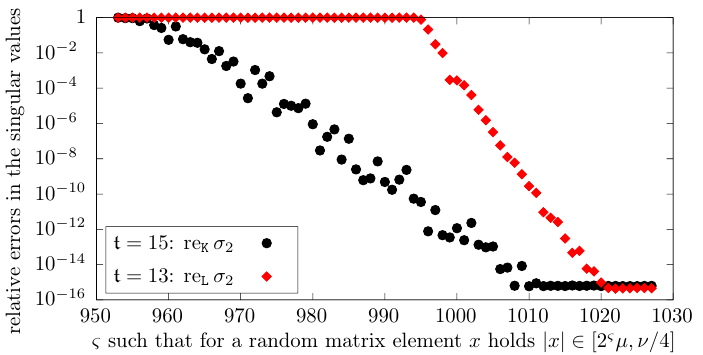}
  \caption{The observed decay of the relative errors in the smaller
    singular values by narrowing of the exponent range of the elements
    of double precision input matrices, where $\kappa_2^{\mathtt{K}}$
    falls from $1.59\cdot 10^{329}$ for $\varsigma=953$ to
    $2.33\cdot 10^{308}$ for $\varsigma=1027$, and
    $\kappa_2^{\mathtt{L}}$ from $2.23\cdot 10^{656}$ to
    $4.93\cdot 10^{611}$.}\label{f:4}
\end{figure}

A timing comparison of \texttt{xLASV2} ($\mathtt{L}$) and the proposed
method ($\mathtt{K}$) in single\footnote{These results were computed
using the Intel oneAPI Base and HPC toolkits, version 2025.0.4.} and
double precision is given in Table~\ref{t:time}.  Each method call was
timed by the \texttt{clock\_gettime} Linux function\footnote{See,
e.g., \url{https://man7.org/linux/man-pages/man2/clock_gettime.2.html}
URL\@.} using the \texttt{CLOCK\_MONOTONIC\_RAW} clock.  By
construction, $\mathtt{K}$ is more computationally complex than
$\mathtt{L}$, so it is not unexpected to be, on average, more than
twice slower.  In the Kogbetliantz SVD for matrices of order $n>2$,
each $2\times 2$ SVD, itself of a constant complexity, is followed by
(at most four) $\mathop{O}(n)$ matrix updates, and thus the run-time
of any reasonably fast $2\times 2$ SVD method is not a bottleneck.  A
sufficiently accurate SVD of order two might even reduce somewhat the
overall number of iterations of the Kogbetliantz SVD on certain
inputs, compared to using a less accurate but faster method, and, if
so, partly compensate for its relative slowness.

\begin{table}[hbtp]
\caption{Average run-times per matrix of the methods $\mathtt{K}$ and
  $\mathtt{L}$, on $\circ$ and $\bullet$ upper triangular random
  matrices ($22\cdot 2^{30}$ of each kind), and the slowdown of
  $\mathtt{K}$ versus $\mathtt{L}$.}\label{t:time}
{\addtolength{\tabcolsep}{-.5pt}
\begin{tabular}{@{}ccccc@{}}
\toprule method &
single precision $\circ$ & single precision $\bullet$ &
double precision $\circ$ & double precision $\bullet$\\
\midrule
$\mathtt{K}$ & $3.133\cdot 10^{-6}$~s & $2.998\cdot 10^{-6}$~s & $4.370\cdot 10^{-6}$~s & $3.640\cdot 10^{-6}$~s\\
$\mathtt{L}$ & $1.476\cdot 10^{-6}$~s & $1.668\cdot 10^{-6}$~s & $1.658\cdot 10^{-6}$~s & $1.653\cdot 10^{-6}$~s\\
\midrule
$\mathtt{K}/\mathtt{L}$ & $2.123$ & $1.797$ & $2.636$ & $2.202$\\
\botrule
\end{tabular}}
\end{table}

An unoptimized OpenMP-parallel implementation of the Kogbetliantz SVD
for $G$ of order $n>2$ with the scaling of $G$ in the spirit
of~\cite{Novakovic-23} but stronger (accounting for the two-sided
transformations of $G$) and the modified modulus pivot
strategy~\cite{Novakovic-Singer-11}, when run with 64 threads spread
across the CPU cores, a deterministic reduction procedure, and
\texttt{OMP\_DYNAMIC=FALSE}, showed up to 10\% speedup over the
one-sided Jacobi SVD routine without preconditioning,
\texttt{DGESVJ}~\cite{Drmac-97,Drmac-Veselic-08b}, from the threaded
Intel MKL library for large enough $n$ (up to $5376$), with the left
singular vectors from the former being a bit more orthogonal than the
ones from the latter, while the opposite was true for the right
singular vectors, on the highly conditioned input matrices
from~\cite{Novakovic-23}.  The singular values from \texttt{DGESVJ}
were less than an order of magnitude more accurate.
\section{Conclusions and future work}\label{s:5}
The proposed Kogbetliantz method for the SVD of order two computed
highly numerically orthogonal singular vectors in all tests.  The
larger singular values were relatively accurate up to a few
$\varepsilon$ in all tests, and the smaller ones were when the input
matrices were triangular, or, for the general (without zeros) input
matrices, if the range of their elements was narrower than or about
half of the width of the range of normal values.

The constituent phases of the method can be used on their own.  The
prescaling might help \texttt{xLASV2} when its inputs are small.  The
highly accurate triangularization might be combined with
\texttt{xLASV2} instead, as an alternative method for general
matrices.  And the proposed SVD of triangular matrices demonstrates
some of the benefits of the more complex correctly rounded operations
($\hypot$), but they go beyond that.

High relative accuracy for $\tan(2\varphi)$ from~\eqref{e:trad} might
be achieved, barring underflow, if the four-way fused dot product
operation $ab+cd+ef+gh$, DOT4, with a single rounding of the exact
value~\cite{Lutz-et-al-24}, becomes available in hardware.  Then the
denominator of the expression for $\tan(2\varphi)$ in~\eqref{e:trad}
could be computed, even without scaling if in a wider datatype, by the
DOT4, and the numerator by the DOT2 ($ab+cd$) operation.

The proposed heuristic for improving orthogonality of the left
singular vectors might be helpful in other cases when two plane
rotations have to be composed into one and the tangents of their
angles are known.  It already brings a slight advantage to the
Kogbetliantz SVD of order $n$ with respect to the one-sided Jacobi
SVD in this regard.

\looseness=-1
With a proper vectorization, and by removing all redundancies from the
preliminary implementation, it might be feasible to speed up the
Kogbetliantz SVD of order $n$ further, since adding more threads is
beneficial as long as their number is not above $\mathsf{n}$.
\backmatter
\begin{appendices}
\noindent\textbf{Supplementary information.}
The document \texttt{sm.pdf} supplements this paper with further
remarks on methods for larger matrices and the single precision
testing results.

\noindent\textbf{Acknowledgments.}
The author would like to thank Dean Singer for his material support,
to Vjeran Hari for fruitful discussions, and to the anonymous referee
for the suggestions that significantly improved the manuscript's
completeness and notation.
\section*{Declarations}
\noindent\textbf{Funding.}
This work was supported in part by Croatian Science Foundation under
the expired project IP--2014--09--3670 ``Matrix Factorizations and
Block Diagonalization Algorithms''
(\href{https://web.math.pmf.unizg.hr/mfbda}{MFBDA}), in the form of
unlimited compute time granted for the testing.

\noindent\textbf{Competing interests.}
The author has no relevant competing interests to declare.

\noindent\textbf{Code availability.}
The code is available in \url{https://github.com/venovako/KogAcc}
repository, and in the supporting \url{https://github.com/venovako/libpvn} repository.
\end{appendices}
%

\end{document}